%% ****** Start of file apstemplate.tex ****** %
%%
%%
%%   This file is part of the APS files in the REVTeX 4 distribution.
%%   Version 4.1r of REVTeX, August 2010
%%
%%
%%   Copyright (c) 2001, 2009, 2010 The American Physical Society.
%%
%%   See the REVTeX 4 README file for restrictions and more information.
%%
%
% This is a template for producing manuscripts for use with REVTEX 4.0
% Copy this file to another name and then work on that file.
% That way, you always have this original template file to use.
%
% Group addresses by affiliation; use superscriptaddress for long
% author lists, or if there are many overlapping affiliations.
% For Phys. Rev. appearance, change preprint to twocolumn.
% Choose pra, prb, prc, prd, pre, prl, prstab, prstper, or rmp for journal
%  Add 'draft' option to mark overfull boxes with black boxes
%  Add 'showpacs' option to make PACS codes appear
%  Add 'showkeys' option to make keywords appear
%\documentclass[aps,prl,preprint,groupedaddress]{revtex4-1}
%\documentclass[aps,prl,preprint,superscriptaddress]{revtex4-1}
%\documentclass[aps,prl,reprint,groupedaddress]{revtex4-1}
\documentclass[showkeys,aps,prl,twocolumn,superscriptaddress,10pt,nofootinbib]{revtex4-1}
%\documentclass{amsart}

%% ========== Options to Toggle When Compiling ==============
%\usepackage{syntonly}
%\syntaxonly
%\usepackage[notcite]{{showkeys} %% Show tags and labels.
% %\usepackage{layout}            %% Show variable values controlling page layout.
%\allowdisplaybreaks[1]         %% Allow multiline displays to split.
%\nobibliography     %% Use proper citations, but do not generate bibliography.

%% ========== Select *.tex file encoding and language ==============
%\usepackage[language]{babel} %% Takes care of all language requirements.

%\usepackage[latin1]{inputenc}  %% Use with PuTTY or TeXMaker
\usepackage[utf8]{inputenc}  %% Use on most OS's, such as Ubuntu.

%% ============== Page Styles ==============
% \usepackage{fancyhdr}
% \pagestyle{fancy}
% \pagestyle{empty}

%% ============== Page Layout ==============
%% Allow extra space at the bottom of pages.
\raggedbottom     

%% Use smaller margins.
%\usepackage{fullpage}

%%Control page number placement.  \thepage is the current page number.
% \renewcommand{\headrulewidth}{0pt}
% \lhead{}
% \chead{}
% \rhead{}
% \lfoot{}
% \cfoot{\thepage}
% \rfoot{}

%\usepackage{geometry}  %% Can adjust the margins of individual pages
%% Use it like this:
%% \newgeometry{left=3cm,bottom=0.1cm}
%%     ... Lines that require margins adjusted ...
%% \restoregeometry

%% ============== Math Packages ==============
\usepackage{amsmath}
\usepackage{amsfonts}
\usepackage{amssymb}
\usepackage{amsthm}
\usepackage{mathtools} % An improvement of amsmath
\usepackage{latexsym}

%% ============ Typesetting add-ons ============
%\usepackage{siunitx} %Support for SI units, \num, \SI, etc.

%% ============== Single-Use Packages ==============
% \usepackage{enumerate}
% \usepackage{cancel}
% \usepackage{cases}
\usepackage{empheq}
% \usepackage{multicol}

%% ============== Graphics Packages ==============
%\usepackage{graphicx} %% Conflicts with pdflatex.
%\usepackage{graphics} %% Conflicts with eps files.
%\usepackage{epsfig} Allows eps files (?)

\usepackage{wrapfig}
\usepackage{subfigure}

%% Note: For using .eps graphics, use the graphicx package,
%% and in the document use, for example:
%% \begin{figure}
%%  \includegraphics[scale=0.5]{my_picture.eps}
%% \end{figure}

%% Prevent figures from appearing on a page by themselves:
%\renewcommand{\topfraction}{0.85}
%\renewcommand{\textfraction}{0.1}
%\renewcommand{\floatpagefraction}{0.75}

%% Force floats to always appear after their definition: 
%\usepackage{flafter}

%% ============== tikZ and PGF packages ==============
%\usepackage{ltxtable,tabularx,tabulary}

% \usepackage{tikz}
% \usepackage{pgf}
% \usepackage{pgfplots} %% Requires pgf 2.0 or later.
% \usetikzlibrary{arrows, automata, backgrounds, calendar, 
% chains,matrix, mindmap, patterns, petri, shadows, 
% shapes.geometric,shapes.misc,
% spy, trees}

%% ============== Colors ==============
%% Warning: These are often a source of conflicts during compilation.
\usepackage{color}

%% ============== Notes ==============
\usepackage[color=lightgray]{todonotes}
% \usepackage[disable]{todonotes}
%\listoftodos, \todo[noline]{}, \todo[inline]{}, \todo{}, \missingfigure{}
   
%% ============== Fonts ==============
%\usepackage{bbm}  %% Non-Vanilla: Not include in many LaTeX distributions.
\usepackage{mathrsfs}
\usepackage{fontenc} %T1 font encoding
\usepackage{inputenc} %UTF-8 support

\usepackage{verbatim}

%% Microtype improves spacing.  Load after fonts.
% \usepackage{microtype}

%% ============== Theorem Styles ==============
%% Note: newtheorem* prevents numbering.

\theoremstyle{plain}
\newtheorem{theorem}{Theorem}[section]

\theoremstyle{definition}

\theoremstyle{remark}
\newtheorem{remark}[theorem]{Remark}

%% ============== References ==============
\setcounter{secnumdepth}{3} %% Used to label subsections
\numberwithin{equation}{section} %% Equation numbering control.
\numberwithin{figure}{section}   %% Figure numbering control.

% \usepackage[square,comma,numbers,sort&compress]{natbib}
% \usepackage[colorlinks=true, pdfborder={0 0 0}]{hyperref}
% \hypersetup{urlcolor=blue, citecolor=red}
% \usepackage{url}

%% Reference things as 'fig. 1', 'Lemma 7', etc.
% \usepackage{cleveref}

%% Create references like 'on the following page', 'on page 23'
% \usepackage{varioref} 

% usepackage[refpages]{gloss} %% Glossary

%%%%%%%%%%%%%%%%%%%%% MACROS %%%%%%%%%%%%%%%%%%%%%

% ============================== Vectors ==============================
\newcommand{\vect}[1]{\mathbf{#1}}

\newcommand{\bu}{\vect{u}}
\newcommand{\bv}{\vect{v}}

\newcommand{\bx}{\vect{x}}

\newcommand{\bomega}{\boldsymbol{\omega}}

% ==================== Fields ==================
\newcommand{\field}[1]{\mathbb{#1}}

\newcommand{\nZ}{\field{Z}}

\newcommand{\nR}{\field{R}}

% ======================== Script Symbols  ========================

% ====================== Caligraphic Symbols ======================

% ======================== Fraktur Symbols  ========================
% Note: Use mathrsfs package.

% ========================== Bold Symbols ==========================

% ======================== Misc. Symbols ========================

\newcommand{\maps}{\rightarrow}

%\newcommand{\dhr}{\m\athrel{\lhook\joinrel\relbar\kern-.8ex\joinrel\lhook\joinrel\rightarrow}}

% ========================== Operations ==========================

 %\npd{f}{x}{2}
\newcommand{\abs}[1]{\left\lvert#1\right\rvert}
%\newcommand\norm[1]{\left\vert\mkern-1.7mu\left\vert#1\right\vert\mkern-1.7mu\right\vert}
%\newcommand\bnorm[1]{\bigl\vert\mkern-2mu\bigl\vert#1\bigr\vert\mkern-2mu\bigr\vert}

%Advection operators:

% ============ Special Macros For This Paper ==================

% \newcommand{\thetaSt}{\cnj{\theta}} % 'St' for 'Steady.'
% \newcommand{\uSt}{\cnj{u}}
% \newcommand{\buSt}{\cnj{\bu}}

 % 'St' for 'Steady.'

 % 'Per' for 'Perturbation.'

%\renewcommand{\section}[1]{\section{\texorpdfstring{#1}}}

\newcommand{\crossprod}{\hspace{-0.3mm}\times}
% ========================== Norms ==========================

%% ============== Counters ==============
\newcounter{my_counter}
\setcounter{my_counter}{1} 

% ================== Title Page ========================
% Remove the author and date fields and the space associated with them
% from the definition of maketitle.
% \makeatletter
% \renewcommand{\@maketitle}{
% \newpage
%  \null
%  \vskip 2em%
%  \begin{center}%
%   {\LARGE \@title \par }%
%  \end{center}%
%  \par} \makeatother

% Larios files ================================
% \newcommand*{\rootDir}{/home/adam/m/r/works/Larios_Wingate_Petersen_Titi}%
% \newcommand*{\bibDir}{/home/adam/m/r/works/bib/}%

% You should use BibTeX and apsrev.bst for references
% Choosing a journal automatically selects the correct APS
% BibTeX style file (bst file), so only uncomment the line
% below if necessary.
%\bibliographystyle{apsrev4-1}

\newcommand\blfootnote[1]{%
  \begingroup
  \renewcommand\thefootnote{}\footnote{#1}%
  \addtocounter{footnote}{-1}%
  \endgroup
}

\begin{document}

% Use the \preprint command to place your local institutional report
% number in the upper righthand corner of the title page in preprint mode.
% Multiple \preprint commands are allowed.
% Use the 'preprintnumbers' class option to override journal defaults
% to display numbers if necessary
%\preprint{}

 % ---------------------- Article Information ---------------------- 
\title[Euler-Voigt Equations]{A Computational Investigation of the Finite-Time Blow-Up of the 3D Incompressible Euler Equations Based on the Voigt Regularization}

\date{\today}
% 
% \author{Edriss S. Titi}
% \address[Edriss S. Titi]{Department of Mathematics, and Department of Mechanical and Aero-space Engineering,\\ University of California, Irvine,\\ Irvine CA 92697-3875, USA.\\  Also The Department of Computer Science and Applied Mathematics,\\ The Weizmann Institute of Science, Rehovot 76100, Israel}
% \email[Edriss S. Titi]{etiti@math.uci.edu and edriss.titi@weizmann.ac.il}

% \thanks{This work was supported in part by...}

%  \pacs{Primary: 35Q30, 76A10, 76B03, 76D03, 76F20, 76F55, 76F65, 76W05}
 \keywords{Euler-Voigt, Navier-Stokes-Voigt, Inviscid Regularization,Turbulence Models, $\alpha-$Models,\\Blow-Up Criterion for Euler.\\ MSC 2010: 35Q30, 76A10, 76B03, 76D03, 76F20, 76F55, 76F65, 76W05}

% repeat the \author .. \affiliation  etc. as needed
% \email, \thanks, \homepage, \altaffiliation all apply to the current
% author. Explanatory text should go in the []'s, actual e-mail
% address or url should go in the {}'s for \email and \homepage.
% Please use the appropriate macro foreach each type of information

% \affiliation command applies to all authors since the last
% \affiliation command. The \affiliation command should follow the
% other information
% \affiliation can be followed by \email, \homepage, \thanks as well.

% ----------------------  Author Information ---------------------- 
\author{Adam Larios}
\affiliation{University of Nebraska-Lincoln}
% \altaffiliation[Adam Larios]{Los Alamos National Lab}
% \address[Adam Larios]{Department of Mathematics\\
%                 University of California, Irvine\\
% 		Irvine CA 92697-3875, USA}
\email[Adam Larios]{$\quad$alarios@unl.edu}
%\email[]{Your e-mail address}
%\homepage[]{Your web page}
%\thanks{}
%\altaffiliation{}

\author{Mark R. Petersen}
\affiliation{Los Alamos National Lab}
% \address[Mark R. Petersen]{CCS-2, MS B296 \\
%                            Los Alamos National Laboratory \\
%                            Los Alamos, NM 87545, USA \\
%                            }
\email[Mark R. Petersen]{$\quad$mpetersen@lanl.gov}

\author{Edriss~S.~Titi}
\affiliation{Texas A\&M University and Weizmann Institute of Science}
% \address[Edriss S. Titi]{Department of Mathematics, and Department of Mechanical and Aero-space Engineering, University of California, Irvine, Irvine CA 92697-3875, USA. Also The Department of Computer Science and Applied Mathematics, The Weizmann Institute of Science, Rehovot 76100, Israel. Fellow of the Center of Smart Interfaces, Technische Universit\"at Darmstadt, Germany.}
% \email[Edriss S. Titi]{etiti@math.uci.edu and edriss.titi@weizmann.ac.il}
\email[Edriss~S.~Titi]{$\quad$titi@math.tamu.edu \\ and edriss.titi@weizmann.ac.il}

\author{Beth Wingate}{
\affiliation{University of Exeter}
% \address{CCS-2, MS D413\\
%                 Los Alamos National Lab\\
% 		Irvine CA 92697-3875, USA
% 		\email{wingate@lanl.gov}
% 		}
\email[Beth Wingate]{$\quad$B.Wingate@exeter.ac.uk}

%Collaboration name if desired (requires use of superscriptaddress
%option in \documentclass). \noaffiliation is required (may also be
%used with the \author command).
%\collaboration can be followed by \email, \homepage, \thanks as well.
%\collaboration{}
%\noaffiliation

\date{\today}

\begin{abstract}
We report the results of a computational investigation of two blow-up criteria for the 3D incompressible Euler equations.  One criterion was proven in a previous work, and a related criterion is proved here.  These criteria are based on an inviscid regularization of the Euler equations known as the 3D Euler-Voigt equations, which are known to be globally well-posed.  Moreover, simulations of the 3D Euler-Voigt equations also require less resolution than simulations of the 3D Euler equations for fixed values of the regularization parameter $\alpha>0$.  Therefore, the new blow-up criteria allow one to gain information about possible singularity formation in the 3D Euler equations indirectly; namely, by simulating the better-behaved 3D Euler-Voigt equations.  The new criteria are only known to be sufficient for blow-up.  Therefore, to test the robustness of the inviscid-regularization approach, we also investigate analogous criteria for blow-up of the 1D Burgers equation, where blow-up is well-known to occur.
% \keywords{Euler-Voigt \and Navier-Stokes-Voigt \and Inviscid Regularization \and Turbulence Models \and $\alpha-$Models \and Blow-Up Criterion for Euler}
% \PACS{PACS code1 \and PACS code2 \and more}
%\subclass{35Q30, 76A10, 76B03, 76D03, 76F20, 76F55, 76F65, 76W05}
\end{abstract}

\maketitle

% =================================================================
\section{Introduction\label{sec:intro}}
% =================================================================
The 3D Euler equations for incompressible inviscid fluid flow are a source of much mathematical and scientific interest.  In particular, these equations exhibit many of the same  difficulties as the 3D Navier-Stokes equations in the case of large Reynolds numbers.  The question of whether these equations develop a finite-time singularity remains an extremely challenging open problem.  \blfootnote{{\bf To appear in:} Theoretical and Computational Fluid Dynamics.}

A blow-up criterion for the 3D Euler equations for ideal incompressible flow was reported in \cite{Larios_Titi_2009}.  This criterion is of a different character than, e.g., the well-known Beale-Kato-Majda criterion \cite{Beale_Kato_Majda_1984}.   Traditional computational searches for blow-up seek to identify singularities by analyzing the vorticity coming from the 3D Euler equations themselves, which are not known to be globally well-posed, and moreover, are extremely difficult to simulate accurately.  In contrast, the blow-up criterion in \cite{Larios_Titi_2009} only relies on analyzing the vorticity of the 3D Euler-Voigt equations, which are globally well-posed and can be less computationally intensive to simulate accurately.  

An important aspect of the Euler-Voigt model, when used as a regularization for the Euler equations, is that the regularization is inviscid in the sense that it does not add artificial viscosity.  Hence, we refer to the Voigt-regularization as an \textit{inviscid regularization}.  Moreover, the Voigt-regularization can be used to stabilize simulations of the Euler equations by a method different from adding artificial viscosity, as is done, e.g., in LES (Large-Eddy Simulation) models (see, e.g., \cite{Berselli_Iliescu_Layton_2006_book}, and the references therein).  Inviscid regularization is distinct from regularizations that use artificial viscosity: while artificial viscosity removes energy from the system, the Euler-Voigt equations conserve a modified energy for all time (see \eqref{energy_equality} below). 
We use this conservation as one test of the validity of our simulations.  
% It is well-known that energy conservation is typically lost after a short time in simulations of the 3D Euler equations. 
Moreover, the blow-up criterion we test is derived from \eqref{energy_equality} and the short-time energy conservation of the 3D Euler equations.

% Beth's addition:
% 
% One of the most important aspects of the Euler-Voigt model is that the
% regularization can be used to 'open up' simulations of the Euler
% equations by stretching out time (reword this a bit), allowing us to
% look into the early times before blow up occurs. This type of
% regularization is significantly different than regularizations that
% use artificial viscosity, which always serves to remove energy. This
% regularization is inviscid but still still allows us to resolve,
% computationally, the possibility of blow-up.  

In this article, we describe the first computational search for blow-up of the 3D Euler equations based on a Voigt-type blow-up criterion.  We also provide a new blow-up criterion that is similar in character to the criterion in \cite{Larios_Titi_2009}, but that has several advantages over it.  
One interesting result of the present work is that extrapolation to 
$\alpha = 0$ suggests  the development of a singularity in the 3D Euler 
equations.
% The key result of the present work is that, using a Voigt-type approach, we find computational evidence for the development of a singularity in the 3D Euler equations.  
The blow-up time $T_*$ coincides approximately with the prediction $T_*\approx4.4\pm0.2$ in \cite{Brachet_Meiron_Orszag_Nickel_Morf_Frisch_1983_JFM} (see also \cite{Bustamante_Brachet_2012}).  However, the purpose of this work is chiefly to motivate the fluid mechanics computational community toward further investigation of this type of criterion, rather than to make a definite claim about blow-up.  Because this is a new approach to studying blow-up, we show how the method provides evidence for blow-up in a case where blow-up is well understood; namely, in the inviscid Burgers equation.  For additional corroboration of the method, we also show that blow-up is not detected in the viscous Burgers equation, where it is known that blow-up does not occur.
% (Section \ref{sec_BBM}). 

% An additional goal of the present work is to provide a preliminary computational investigation of the statistics of the 3D Euler-Voigt equations.  

The Euler-Voigt equations were proposed as an inviscid regularization of the Euler equations in \cite{Cao_Lunasin_Titi_2006}, where they were first studied.  Their viscous counterpart, called the Navier-Stokes-Voigt equations, were studied much earlier in \cite{Oskolkov_1973,Oskolkov_1982}.  The Euler-Voigt equations are given by
\begin{subequations}\label{EV}
\begin{empheq}[left=\empheqlbrace]{align}
\label{EV_mo}
-\alpha^2\partial_t\nabla^2\bu
+\partial_t\bu +(\bu\cdot\nabla)\bu
+\nabla p&=0%\nu\nabla^2\bu +\bF
,
\\
\label{EV_div}
\nabla\cdot \bu
&= 0,
\\
\label{EV_int}
\bu(\bx,0) &= \bu_0(\bx).
\end{empheq}
\end{subequations}
Here $\alpha>0$ is a regularization parameter having units of length.  Note that the usual incompressible Euler equations are formally obtained by setting $\alpha=0$.  The unknowns are the fluid velocity field $\bu(\bx,t)=(u_1,u_2,u_3)$, and the fluid pressure $p(\bx,t)$, where $\bx = (x_1,x_2,x_3)$, and $t\geq0$. In the present work, we consider only the case of periodic boundary conditions.  (Periodic boundary conditions are often used in computational studies; the review \cite{Gibbon_2008} cites more than twenty such studies.)  Without loss of generality, we also assume that $\int_\Omega\bu_0(\bx)\,d\bx=0$, which with \eqref{EV_mo} and \eqref{EV_div} implies $\int_\Omega\bu(\bx,t)\,d\bx=0$ for all $t$.  We denote by $\bu^\alpha$ the solution to \eqref{EV}, and by $\bu$ a  solution to the Euler equations, both starting from the same initial condition $\bu_0$.  In addition, we denote the corresponding vorticities $\bomega := \nabla\crossprod\bu$, and also $\bomega^\alpha := \nabla\crossprod\bu^\alpha$.

%   For comparison, we also consider an analogous relationship between the 1D Burgers equation (where blow-up is well-understood) and the 1D Benjamin-Bona-Mahony (BBM) equation, which can be thought of as a ``Voigt-regularized'' version of Burgers equation.  

 System \eqref{EV} was introduced in \cite{Cao_Lunasin_Titi_2006}, where existence and uniqueness of solutions was proven for all times $t\in(-\infty,\infty)$.  The Euler-Voigt and Navier-Stokes-Voigt equations have been studied analytically and extended in a wide variety of contexts (see, e.g., \cite{Bohm_1992,Catania_2009,Catania_Secchi_2009,Cao_Lunasin_Titi_2006,Larios_Titi_2009,Larios_Lunasin_Titi_2015,Ebrahimi_Holst_Lunasin_2012,Levant_Ramos_Titi_2009,Khouider_Titi_2008,Olson_Titi_2007,Oskolkov_1973,Oskolkov_1982,Ramos_Titi_2010,Kalantarov_Levant_Titi_2009,Kalantarov_Titi_2009}, and the references therein).  The first computational study of the Navier-Stokes-Voigt and MHD-Voigt equations was carried out in \cite{Kuberry_Larios_Rebholz_Wilson_2012}.  A recent computational study \cite{DiMolfetta_Krstlulovic_Brachet_2015} studied the energy spectrum and other properties of the Euler-Voigt equations.  Energy decay for Navier-Stokes-Voigt was studied in \cite{Layton_Rebholz_2013_Voigt}. 

 In \cite{Cao_Lunasin_Titi_2006}, an ``$\alpha$-energy'' equality was proved to hold for solutions of \eqref{EV} for all $t\in\nR$, namely,
 \begin{align}\label{energy_equality}
   E_\alpha(t):= \|\bu^\alpha(t)\|_{L^2}^2 +\alpha^2\|\nabla\bu^\alpha(t)\|_{L^2}^2
   %+2\nu\int_0^T\|\nabla\bu(s)\|_{L^2}^2\,ds
   %\\&
%    =
%    \|\bu_0\|_{L^2}^2+\alpha^2\|\nabla\bu_0\|_{L^2}^2
   = E_\alpha(0).
\end{align}
One aim of this paper is to investigate the connection between the Euler equations and Euler-Voigt equations as $\alpha\maps0$.  In \cite{Larios_Titi_2009}, it was shown that, for sufficiently smooth initial data, on the time interval $[0,T]$ of existence and uniqueness for strong solutions of the Euler equations, the following estimate
% \footnote{See \cite{Larios_Titi_2009} for a more general and slightly stronger form of this bound.} 
holds:
\begin{align}
\label{conv_claim_full}
% \|\bu^\alpha(t)-\bu(t)\|_{L^2}
% &\leq
% C\alpha(e^{Ct}-1)^{1/2}, \text{ for }t\in[0,T].
% &\quad
&\|\bu(t)-\bu^\alpha(t)\|_{L^2}^2
+\alpha^2\|\nabla(\bu(t)- \bu^\alpha(t))\|_{L^2}^2
\\&\leq\notag
C\alpha^2(e^{Ct}-1),
\end{align}
where the constant $C$ depends on $\|\bu\|_{L^\infty(0,T;H^3)}$.
In particular, as $\alpha\maps0$, solutions to \eqref{EV} converge to the solution the Euler equations in the $L^\infty([0,T];L^2)$ norm at a rate no worse than $\mathcal{O}(\alpha)$.  Combining this with \eqref{energy_equality} and the equality $\|\bu(t)\|_{L^2} = \|\bu_0\|_{L^2}$, which holds on $[0,T]$, it was proved in \cite{Larios_Titi_2009}, by contradiction, that if 
\begin{align}\label{blow_up_criterion_old}
\sup_{t\in[0,T^{*}]}\limsup_{\alpha\maps0^+}\,(\alpha\|\nabla \bu^\alpha(t)\|_{L^2})>0,
\end{align}
then the 3D Euler equations must develop a singularity at or before time $T^*$.  
We shall show in Section \ref{sec_new_criterion} that if
\begin{align}\label{blow_up_max_first}
\limsup_{\alpha\maps0^+}\Big(\alpha\sup_{t\in[0,T^{*}]}\|\nabla \bu^\alpha(t)\|_{L^2}\Big)>0,
\end{align}
then again the 3D Euler equations must develop a singularity at or before time $T^*$. 
As noted below, \eqref{blow_up_criterion_old} implies \eqref{blow_up_max_first}, and hence 
\eqref{blow_up_max_first} is a stronger criterion than \eqref{blow_up_criterion_old}, i.e., singularities indicated by \eqref{blow_up_criterion_old} 
% are a (not necessarily proper) subset of those 
will also be
indicated by \eqref{blow_up_max_first}.  

% =====================================================================
\begin{remark}\textit{Comparison with original criterion.}
% =====================================================================
\noindent
The new blow-up criterion \eqref{blow_up_max_first}  is stronger than  \eqref{blow_up_criterion_old}, 
% , as it has the potential to detect singularities that  \eqref{thm_blow_up_old} cannot.
since, for any $u^\alpha\in C([0,T],L^2)\cap L^1([0,T],H^1)$,
\begin{align}
\sup_{t \in [0,T]} \alpha \| \nabla u^\alpha (t) \|_{L^2} \ge \alpha \| \nabla u^\alpha (t) \|_{L^2},
\end{align}
for any $t\in[0,T]$, so we may take the $\limsup_{\alpha \to 0^+}$ of both sides to obtain
\begin{align}
\limsup_{\alpha \to 0^+} \sup_{t \in [0,T]} \alpha \| \nabla u^\alpha (t) \|_{L^2} \ge \limsup_{\alpha \to 0^+} \alpha \| \nabla u^\alpha (t) \|_{L^2}.
\end{align}
The left-hand side is constant, and the right-hand side depends on t. Thus,
\begin{align}
\limsup_{\alpha \to 0^+} \sup_{t \in [0,T]} \alpha \| \nabla u^\alpha (t) \|_{L^2} \ge  \sup_{t \in [0,T]} \limsup_{\alpha \to 0^+} \alpha \| \nabla u^\alpha (t) \|_{L^2}.
\end{align}
Therefore, if the right-hand side is positive, the left-hand side is positive.  Hence, \eqref{blow_up_criterion_old} implies \eqref{blow_up_max_first}.
% However, the left-hand side could be positive while the right-hand side is zero. 
\end{remark}

% as a transfer between the energy $\frac{1}{2}\|\bu^\alpha\|_{L^2}^2$ and the enstrophy $\frac{1}{2}\|\bomega^\alpha\|_{L^2}^2$, mediated by $\alpha$:
% \begin{align*}
%  \frac{d}{dt}\|\bu^\alpha\|_{L^2}^2 = -\alpha^2\frac{d}{dt}\|\bomega^\alpha\|_{L^2}^2
% \end{align*}

The computational search for blow-up has a rich recent history, see, e.g. \cite{Brachet_Meiron_Orszag_Nickel_Morf_Frisch_1983_JFM,Brachet_Meiron_Orszag_Nickel_Morf_Frisch_1984_JSP,Bustamante_2011,Cichowlas_Brachet_2005,Deng_Hou_Yu_2005,Grafke_Grauer_2013_Lagrangian_no_blow_up,Grafke_Homann_Dreher_Grauer_2007,Hou_2009,Hou_Lei_Luo_Wang_Zou_2014_Euler,Hou_Li_2006_Depletion,Hou_Li_2008_Blowup,Hou_Li_2008_Numerical,Kerr_1993,Kerr_2013_Euler_Blowup,Luo_Hou_2013_Potentially_Singular,Luo_Hou_2014_Euler_BlowUp,Pouquet_Brachet_Mininni_2010_TG_MHD,Siegel_Caflisch_2009} and the references therein.  Since it is unknown whether the 3D Euler equations become singular in a finite interval of time, several criteria for the blow-up of solutions have arisen in the literature, e.g., \cite{Beale_Kato_Majda_1984,Cheskidov_Shvydkoy_2014_unified_blow_up,Cheskidov_Shvydkoy_2014_unified_blow_up,Constantin_Fefferman_1993,Constantin_Fefferman_Majda_1996,Ferrari_1993,Gibbon_Titi_2013_Blowup,Kozono_Ogawa_Taniuchi_2002,Ponce_1985}. 
Perhaps the most celebrated is the Beale-Kato-Majda criterion \cite{Beale_Kato_Majda_1984} which states that the solution is non-singular on $[0,T]$ if and only if 
\begin{align}\label{BKM}
\int_0^T\|\bomega(t)\|_{L^\infty}\,dt <\infty.
\end{align}
Hence, in many computational searches for blow-up of solutions of the Euler equations (see, e.g., \cite{Deng_Hou_Yu_2005,Hou_2009,Hou_Li_2008_Blowup,Hou_Li_2008_Numerical,Kerr_1993,Kerr_2013_Euler_Blowup}, and references therein), $\|\bomega(t)\|_{L^\infty}$ is the main quantity of interest.  Thanks to the identity $\|\nabla\bv\|_{L^2} = \|\nabla\crossprod\bv\|_{L^2}$, holding for all smooth divergence-free functions $\bv$, one can view \eqref{blow_up_criterion_old} and \eqref{blow_up_max_first} as conditions on the vorticity $\bomega^\alpha$ of the Euler-Voigt equations.   In Fig. \ref{fig_spec}, we plot the time evolution of the $L^2$ energy spectrum, which is captured within an accuracy of $10^{-12}$.

% ============================= EV spectrum =======================================
\begin{figure}[htp]
\includegraphics[width=0.98\columnwidth,trim = 0mm 0mm 0mm 0mm, clip]{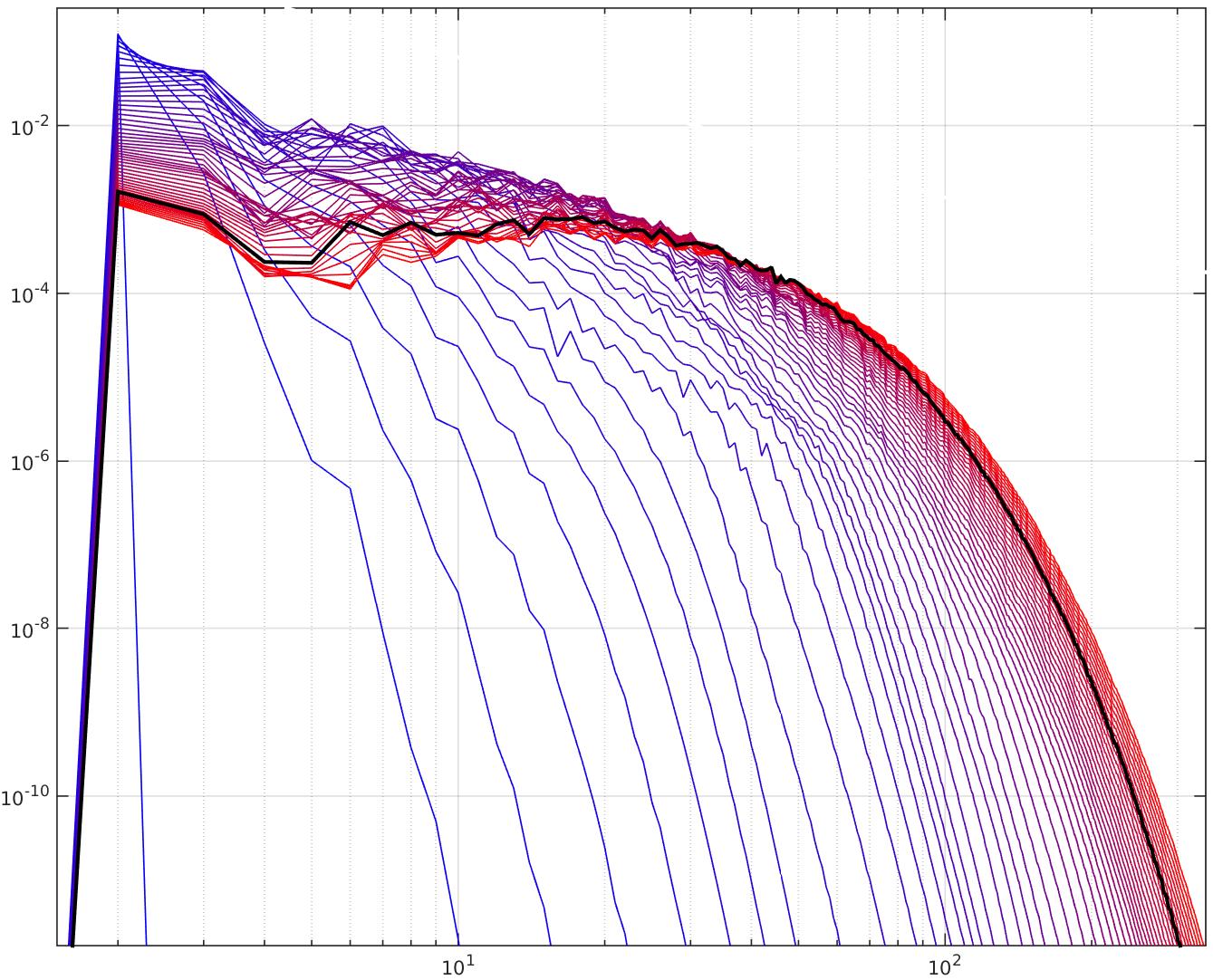}

\vspace{-0.3cm}

\caption{The $L^2$ spectrum vs. wave number the solution of the Euler-Voigt equations with $\alpha=12/1024$ at times $t = 0.0, 0.1, \ldots, 4.9, 5.0$.  At $t=0.0$, the spectrum is blue.  It becomes increasing red as time evolves.  The black spectrum corresponds to time $4.2$, where the smallest slope is observed in Fig. \ref{fig_EV_max_blow_up}.  Resolution: $N^3=1024^3$.  
}\label{fig_spec}
\end{figure}
% =================================================================================

% \FloatBarrier

% We also track this quantity in our simulations for comparison purposes.

\begin{remark}
 We emphasize that quantity \eqref{BKM} is computed from solutions of the 3D Euler equations, which are not known to be globally well-posed. In contrast, the quantity $\|\nabla \bu^\alpha\|_{L^2}$ in \eqref{blow_up_criterion_old} and \eqref{blow_up_max_first} is computed from solutions to \eqref{EV}, which is known to be well-posed globally in time.  This gives a mathematical foundation for reliably computing $\|\nabla\bu^\alpha\|_{L^2}$.  Moreover, due to \eqref{energy_equality}, the growth of the gradient---and hence the development of small length scales---is limited. This is important in numerical simulations, where one has only finite resolution.  In contrast, the 3D Euler equations are not known to possess such a quality.
%  Furthermore, quantity $\eqref{BKM}$ may be influenced by the behavior of the solution at a single point, whereas $\|\nabla \bu^\alpha\|_{L^2}$ takes the entire domain into account.  Therefore, the criterion \eqref{blow_up_criterion_old} and \eqref{blow_up_max_first} may be in some sense be a slightly more reliable or robust quantity to analyze in the computational search for blow-up.
\end{remark}

In Section \ref{sec_new_criterion}, we improve criterion \eqref{blow_up_criterion_old} to criterion \eqref{blow_up_max_first}.  Numerical methods are described in Section \ref{sec_num_methods}.  The main work is in Section \ref{sec_blow_up}, where we computationally investigate the dependence of $\|\nabla \bu^\alpha(t)\|_{L^2}$ on $\alpha$ and $t$, for some given initial data, as $\alpha\maps0$.   It is unknown whether \eqref{blow_up_max_first} (or \eqref{blow_up_criterion_old}) is a necessary condition for the blow-up of solutions of the 3D Euler equations. Hence, to further support the notion that blow-up may be indicated by \eqref{blow_up_max_first}, we consider the 1D inviscid Burgers equation, which  is well-known to have solutions that blow up in finite time.  In Section \ref{sec_BBM}, we apply a Voigt-type regularization to the 1D Burgers equation (yielding the Benjamin-Bona-Mahoney (BBM) equation \eqref{BBM}), and show computationally that the analogues of \eqref{blow_up_criterion_old} and \eqref{blow_up_max_first} appear to be satisfied when $T^*$ approaches the blow-up time of the Burgers equation.  Moreover, we show that \eqref{blow_up_max_first} is no longer satisfied after the addition of viscosity, which conforms with the global-well-posedness of the viscous Burgers equation.
% In Section \ref{worms}, 
% we display  
% % and a brief discussion of the energy  spectrum and the 
% surfaces of constant vorticity for \eqref{EV} for different values of $\alpha$, as these structures may be relevant to the study of the blow-up criterion.

% =================================================================
\section{Improved Blow-Up Criterion}\label{sec_new_criterion}
% =================================================================

In this section, we improve blow-up criterion \eqref{blow_up_criterion_old} to blow-up criterion \eqref{blow_up_max_first}.  Both criteria are derived from \eqref{energy_equality} and the short-time energy conservation of the 3D Euler equations; hence, we briefly discuss recent work relating energy conservation to smoothness.

We denote by $L^p$ and $H^s$ the usual Lebesgue and Sobolev spaces over the periodic domain $\Omega\equiv[0,1]^3:=\nR^3/\nZ^3$, respectively.  It is a classical result (see, e.g., \cite{Majda_Bertozzi_2002,Marchioro_Pulvirenti_1994}) that, for initial data $\bu_0\in H^3$ satisfying $\nabla\cdot\bu_0=0$, a unique strong solution $\bu$ of the 3D Euler equations exists and is unique on a maximal time interval that we denote by $[0,T^*)$.  Moreover, one has
\begin{align}\label{Euler_energy_equality}
\|\bu(t)\|_{L^2} = \|\bu_0\|_{L^2} \text{ on } [0,T^*).
\end{align}
Equation \eqref{Euler_energy_equality} holds under weaker conditions on the smoothness of the solutions of the 3D Euler equations, as it was conjectured by Onsager (see, e.g., \cite{Cheskidov_Constantin_Friedlander_Shvydkoy,Constantin_E_Titi_1994,Eyink_1994,Eyink_Sreenivasan_2006,Onsager_1949}).  However, the existence of such weak solutions for arbitrary admissible initial data is still out of reach.  In \cite{Bardos_Titi_2010}, it was shown that a certain class of shear flows are weak solutions in $L^\infty((0,T);L^2)$ that conserve energy.  Furthermore, families of weak solutions that do not satisfy the regularity assumed in the Onsager conjecture have been constructed that do not satisfy \eqref{Euler_energy_equality}, see, e.g.,  \cite{Buckmaster_DeLellis_Isett_Szekelyhidi_2015,Buckmaster_2015,DeLellis_Szekelyhidi_2010_admissibility,DeLellis_Szekelyhidi_2013,DeLellis_Szekelyhidi_2014_Onsager,Isett_2013_thesis}.

The following theorem was proved in \cite{Larios_Titi_2009}.  It is based on a similar theorem for the surface quasi-geostrophic (SQG) equations in \cite{Khouider_Titi_2008}.
\begin{theorem}[\cite{Larios_Titi_2009}]\label{thm_blow_up_old}
Assume $\bu_0\in H^s$, for some $s\geq3$, with $\nabla\cdot\bu_0=0$. Suppose there exists a $T >0$ such that solutions $\bu^\alpha$ of \eqref{EV} satisfy \eqref{blow_up_criterion_old}. 
Then the 3D Euler equations, with initial data $\bu_0$, must develop a singularity within the interval $[0,T]$.
\end{theorem}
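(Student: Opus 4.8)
The plan is to argue by contradiction, assuming that the 3D Euler equations do \emph{not} develop a singularity in $[0,T]$ and deriving a contradiction with the hypothesis \eqref{blow_up_criterion_old}. Under this contradiction hypothesis the unique $H^3$ strong solution $\bu$ of the Euler equations persists on the whole closed interval $[0,T]$; equivalently, the maximal existence time satisfies $T<T^*$, so that $M:=\|\bu\|_{L^\infty(0,T;H^3)}<\infty$ by the classical local well-posedness and regularity-persistence theory (see, e.g., \cite{Majda_Bertozzi_2002}). The finiteness of $M$ is exactly what makes the constant $C$ appearing in the convergence estimate \eqref{conv_claim_full} finite, and this is the crucial place where the no-blow-up assumption enters.

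With $C=C(M)<\infty$ fixed, I would first read off from the $L^2$ term of \eqref{conv_claim_full} that, for each fixed $t\in[0,T]$,
\begin{align*}
\|\bu(t)-\bu^\alpha(t)\|_{L^2}^2 \le C\alpha^2\pnt{e^{CT}-1}\Maps 0 \quad\text{as }\alpha\maps0^+,
\end{align*}
so that $\|\bu^\alpha(t)\|_{L^2}\maps\|\bu(t)\|_{L^2}$. Combining this with the Euler energy equality \eqref{Euler_energy_equality}, namely $\|\bu(t)\|_{L^2}=\|\bu_0\|_{L^2}$ on $[0,T]$, gives $\|\bu^\alpha(t)\|_{L^2}\maps\|\bu_0\|_{L^2}$.

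Next I would invoke the $\alpha$-energy equality \eqref{energy_equality}, rewritten as
\begin{align*}
\alpha^2\|\nabla\bu^\alpha(t)\|_{L^2}^2 = E_\alpha(0)-\|\bu^\alpha(t)\|_{L^2}^2 = \|\bu_0\|_{L^2}^2+\alpha^2\|\nabla\bu_0\|_{L^2}^2-\|\bu^\alpha(t)\|_{L^2}^2,
\end{align*}
which holds for all $t$. Since $\bu_0\in H^s\subset H^1$, the term $\alpha^2\|\nabla\bu_0\|_{L^2}^2\maps0$, while the preceding step shows $\|\bu_0\|_{L^2}^2-\|\bu^\alpha(t)\|_{L^2}^2\maps0$. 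Hence $\alpha^2\|\nabla\bu^\alpha(t)\|_{L^2}^2\maps0$, i.e.
\begin{align*}
\limsup_{\alpha\maps0^+}\alpha\|\nabla\bu^\alpha(t)\|_{L^2}=0\quad\text{for every }t\in[0,T].
\end{align*}
Taking the supremum over $t\in[0,T]$ of this (identically zero) quantity yields $\sup_{t\in[0,T]}\limsup_{\alpha\maps0^+}\alpha\|\nabla\bu^\alpha(t)\|_{L^2}=0$, which directly contradicts \eqref{blow_up_criterion_old} (read with $T$ in place of $T^*$). This contradiction forces the Euler solution to lose regularity somewhere in $[0,T]$, proving the theorem.

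I expect the only genuine obstacle to lie in the first paragraph: one must justify that ``no singularity in $[0,T]$'' truly delivers a \emph{finite} bound $M=\|\bu\|_{L^\infty(0,T;H^3)}$, and hence a finite $C$, uniformly on the compact interval. Everything downstream is a pointwise-in-$t$ limit, so the outer supremum over $t$ is harmless and may be taken last; this is precisely why the comparatively weak criterion \eqref{blow_up_criterion_old}---with its $\sup_t$ sitting \emph{outside} the $\limsup_\alpha$---already suffices here. As an alternative to the energy-equality step, one may bound $\alpha\|\nabla\bu^\alpha(t)\|_{L^2}\le\alpha\|\nabla\bu(t)\|_{L^2}+\alpha\|\nabla(\bu(t)-\bu^\alpha(t))\|_{L^2}$, sending the first term to $0$ via the boundedness of $\|\nabla\bu(t)\|_{L^2}$ on $[0,T]$ and the second via the gradient term of \eqref{conv_claim_full}; both routes reach the same conclusion.
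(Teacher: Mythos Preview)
Your argument is correct and follows essentially the same route as the paper: argue by contradiction, use the convergence estimate \eqref{conv_claim_full} together with Euler energy conservation \eqref{Euler_energy_equality} and the $\alpha$-energy identity \eqref{energy_equality} to force $\alpha\|\nabla\bu^\alpha(t)\|_{L^2}\to 0$. The only cosmetic difference is that the paper (in the proof following Theorem~\ref{thm_blow_up_old}) squares and keeps an explicit bound uniform in $t\in[0,T]$, thereby obtaining $\limsup_{\alpha\to0^+}\sup_{t}\alpha\|\nabla\bu^\alpha(t)\|_{L^2}=0$ and hence the stronger criterion \eqref{blow_up_max_first}; you instead take a pointwise-in-$t$ limit and then the outer $\sup_t$, which is exactly what \eqref{blow_up_criterion_old} requires and is all that Theorem~\ref{thm_blow_up_old} asks for.
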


A technical difficulty arises in computational tests of Theorem \ref{thm_blow_up_old}. Mathematically, one may imagine fixing a $t>0$ and computing 
\begin{align}\label{blow_up_quantity_1}
\limsup_{\alpha\maps0^+}\left(\alpha\|\nabla \bu^\alpha(t)\|_{L^2}\right).
\end{align}
However, computationally, it is more natural to first fix $\alpha>0$ as a parameter, and then to compute $\bu^\alpha(t)$ as $t$ increases up to a time $T$ (e.g., by a standard time-stepping method).  Therefore, to construct curves of $\alpha\|\nabla \bu^\alpha(t)\|_{L^2}$ vs. $\alpha$ for each fixed $t$, one must jump from solution to solution as $\alpha$ varies.  This gives rise to some of the technical issues discussed above. However, suppose for a moment that one is allowed to commute the two limiting operations in \eqref{blow_up_criterion_old}.  One would then obtain criterion \eqref{blow_up_max_first}.  The quantity in \eqref{blow_up_max_first} is arguably easier to track, as discussed above.
% : one only needs the final output time $T^*$ to agree, and variable time-stepping can be used for individual simulations.  On the other hand, even if \eqref{blow_up_max_first} indicates the formation of a finite-time singularity in the 3D Euler equations, it only indicates that the singularity occurs within the interval $[0,T^*]$, whereas tracking curves at the same time-steps as described above in analyzing \eqref{old_blow_up} may give more detailed information.  
It is the purpose of this section to show rigorously that \eqref{blow_up_max_first} implies that the 3D Euler equations develop a singularity within the interval $[0,T]$.

Let $T>0$ be given.  Assume that a given solution to the Euler equations is smooth on $[0,T]$, so that in particular, \eqref{Euler_energy_equality} holds.
We emphasize that \eqref{Euler_energy_equality} depends on the regularity of the 3D Euler equations, and if a finite-time singularity develops, \eqref{Euler_energy_equality} might not hold.

\begin{theorem}
Let $\bu_0\in H^s$, $s\geq3$, with $\nabla\cdot\bu_0=0$, and let $\bu^\alpha$ be the corresponding unique solution of \eqref{EV}.  Suppose that \eqref{blow_up_criterion_old} holds
%  \begin{align}\label{blow_new}
%  \limsup_{\alpha\maps0^+}\sup_{t\in [0,T]} \alpha\|\nabla\bu^\alpha(t)\|_{L^2}>0,
% \end{align}
for some $T>0$.  Then the Euler equations must develop a singularity within the interval $[0,T]$.
\end{theorem}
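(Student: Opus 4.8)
The plan is to argue by contradiction, following the scheme of Theorem~\ref{thm_blow_up_old} from \cite{Larios_Titi_2009} but exploiting the \emph{uniformity in $t$} of the convergence estimate \eqref{conv_claim_full}. Suppose, contrary to the desired conclusion, that the unique strong solution $\bu$ of the 3D Euler equations with initial data $\bu_0$ remains smooth (say, in $H^s$) on the entire closed interval $[0,T]$. Then two facts hold on $[0,T]$: the energy equality \eqref{Euler_energy_equality}, $\|\bu(t)\|_{L^2}=\|\bu_0\|_{L^2}$; and the estimate \eqref{conv_claim_full}, which after dropping the nonnegative term $\|\bu(t)-\bu^\alpha(t)\|_{L^2}^2$ gives
\begin{align*}
\alpha^2\|\nabla(\bu(t)-\bu^\alpha(t))\|_{L^2}^2 \le C\alpha^2\pnt{e^{Ct}-1}\le C\alpha^2\pnt{e^{CT}-1}
\end{align*}
for every $t\in[0,T]$, where $C=C\pnt{\|\bu\|_{L^\infty(0,T;H^3)}}<\infty$ precisely because $\bu$ is assumed regular on $[0,T]$.

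Next I would write $\nabla\bu^\alpha(t)=\nabla\bu(t)+\nabla\pnt{\bu^\alpha(t)-\bu(t)}$ and apply the triangle inequality in $L^2$, obtaining, for all $t\in[0,T]$,
\begin{align*}
\alpha\|\nabla\bu^\alpha(t)\|_{L^2}\le \alpha\|\nabla\bu(t)\|_{L^2}+\alpha\|\nabla\pnt{\bu^\alpha(t)-\bu(t)}\|_{L^2}\le \alpha\,C_1+\alpha\sqrt{C\pnt{e^{CT}-1}},
\end{align*}
where $C_1:=\sup_{t\in[0,T]}\|\nabla\bu(t)\|_{L^2}\le\|\bu\|_{L^\infty(0,T;H^1)}<\infty$. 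The crucial point is that the right-hand side is a fixed multiple of $\alpha$ with \emph{no dependence on $t$}; hence I may take $\sup_{t\in[0,T]}$ of the left-hand side without changing the right-hand side, and then take $\limsup_{\alpha\to0^+}$ of both sides:
\begin{align*}
\limsup_{\alpha\to0^+}\Big(\alpha\sup_{t\in[0,T]}\|\nabla\bu^\alpha(t)\|_{L^2}\Big)\le \limsup_{\alpha\to0^+}\alpha\Big(C_1+\sqrt{C\pnt{e^{CT}-1}}\Big)=0.
\end{align*}
This contradicts the hypothesis \eqref{blow_up_max_first}. Therefore $\bu$ cannot remain smooth on all of $[0,T]$; that is, the maximal existence time satisfies $T^*\le T$, so the 3D Euler equations develop a singularity at or before time $T$.

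I do not expect a genuine analytic obstacle here: the hard work is already packaged in the estimate \eqref{conv_claim_full} established in \cite{Larios_Titi_2009}. The only step that requires care---and the one that distinguishes this argument from the proof of Theorem~\ref{thm_blow_up_old}, where the $\sup_t$ sits \emph{outside} the $\limsup_\alpha$---is the verification that the bound on $\alpha\|\nabla(\bu-\bu^\alpha)(t)\|_{L^2}$ is genuinely uniform over $t\in[0,T]$, which is exactly what legitimizes pulling $\sup_t$ inside before passing to the limit in $\alpha$; this uniformity follows at once from $e^{Ct}\le e^{CT}$ on $[0,T]$ together with the finiteness of $C_1$ and of $C=C(\|\bu\|_{L^\infty(0,T;H^3)})$ under the smoothness assumption. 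Finally, I would note that, by the implication ``\eqref{blow_up_criterion_old}$\Rightarrow$\eqref{blow_up_max_first}'' recorded in the Remark above, this theorem contains Theorem~\ref{thm_blow_up_old} as a special case.
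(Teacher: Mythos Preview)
Your proof is correct, but it takes a different route from the paper's. The paper uses only the $L^2$ part of \eqref{conv_claim_full}: via the reverse triangle inequality and the Euler energy conservation \eqref{Euler_energy_equality} it obtains a lower bound $\|\bu^\alpha(t)\|_{L^2}^2\ge\|\bu_0\|_{L^2}^2-2C\alpha\|\bu_0\|_{L^2}(e^{CT}-1)^{1/2}+C^2\alpha^2(e^{CT}-1)$, and then subtracts this from the Voigt $\alpha$-energy identity \eqref{energy_equality} to bound $\alpha^2\|\nabla\bu^\alpha(t)\|_{L^2}^2$ from above by a quantity that is $O(\alpha)$ uniformly in $t$. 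You instead use the $H^1$ part of \eqref{conv_claim_full} directly, combine it with the triangle inequality and the finiteness of $\sup_{t\in[0,T]}\|\nabla\bu(t)\|_{L^2}$ under the regularity assumption, and never invoke either energy identity. Your argument is shorter and in fact yields the sharper bound $\alpha\|\nabla\bu^\alpha(t)\|_{L^2}=O(\alpha)$ rather than the paper's $O(\alpha^{1/2})$; the paper's route, on the other hand, makes explicit the mechanism emphasized in the introduction---that the criterion is ``derived from \eqref{energy_equality} and the short-time energy conservation of the 3D Euler equations''---and so highlights the physical content of the result. Note also that the theorem as printed references \eqref{blow_up_criterion_old}, but the surrounding text and the paper's own proof make clear that the intended hypothesis is \eqref{blow_up_max_first}; you have correctly read it that way.
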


\begin{proof}
We prove the contrapositive.  Assume that $\bu$ is a solution of the 3D Euler equations, with initial data $\bu_0\in H^s$, $s\geq3$, that remains smooth on the interval $[0,T]$.  In particular, the smoothness implies that \eqref{Euler_energy_equality} holds.  From \eqref{conv_claim_full}, for any $t\in[0,T]$, it follows that
\begin{align}\label{conv_claim}
\|\bu^\alpha(t)\|_{L^2}
&\geq
\|\bu(t)\|_{L^2}-C\alpha(e^{Ct}-1)^{1/2}
\\&\geq
\|\bu(t)\|_{L^2}-C\alpha(e^{CT}-1)^{1/2}
\\&=\nonumber
\|\bu_0\|_{L^2}-C\alpha(e^{CT}-1)^{1/2}.
\end{align}
Here, we have used \eqref{Euler_energy_equality}.  
Let $\alpha>0$ be sufficiently small so that the right-hand side is positive (e.g., choose, $\alpha< \|\bu_0\|_{L^2}/(C(e^{CT}-1)^{1/2})$.  Squaring, we obtain,
\begin{align}\label{L2est}
\|\bu^\alpha(t)\|_{L^2}^2
&\geq
\|\bu_0\|_{L^2}^2
-2C\alpha\|\bu_0\|_{L^2}(e^{CT}-1)^{1/2}
 \\&\qquad\notag
+C^2\alpha^2(e^{CT}-1).
\end{align}
Combining \eqref{L2est} and \eqref{energy_equality}, we discover
 \begin{align*}
\alpha^2\|\nabla\bu^\alpha(t)\|_{L^2}^2
&\leq  
\alpha^2\|\nabla\bu_0\|_{L^2}^2
+2C\alpha\|\bu_0\|_{L^2}(e^{CT}-1)^{1/2}
 \\&\qquad
-C^2\alpha^2(e^{CT}-1).
\end{align*}
Thus,
$
 \limsup_{\alpha\maps0^+}\sup_{t\in [0,T]} \alpha^2\|\nabla\bu^\alpha(t)\|_{L^2}^2=0,
$
which contradicts assumption \eqref{blow_up_criterion_old}, and therefore the solution $\bu$ of the Euler equations must become singular within the interval $[0,T]$.
\end{proof}

% {sec_BBM}{sec_2D}
% =================================================================
\section{Numerical Methods}\label{sec_num_methods}
% =================================================================
All simulations were carried out using a pseudospectral method on the periodic unit cube; namely, with derivatives computed in Fourier space, and products computed in physical space with the $2/3$'s dealiasing rule applied.  Time stepping for the inviscid equations was done using a fully-explicit fourth-order Runge-Kutta-4 scheme complying with the advective CFL condition.  (For the viscous Burgers equation, an integrating-factor method adapted to Runge-Kutta-4 was used to avoid the viscous CFL restriction.)  The pressure was computed explicitly by the standard Chorin-Temam projection method \cite{Chorin_1968,Temam_1969_projection}.  For the Euler-Voigt simulations, Taylor-Green initial data was used on the domain $[0,1]^3$, namely,
\begin{align}
\begin{split}\label{TaylorGreen}
   u_1  &=  \phantom{-}\sin(2\pi x)\cos(2\pi y)\cos(2\pi z), 
\\ u_2  &= -\cos(2\pi x)\sin(2\pi y)\cos(2\pi z),
\\ u_3 &=  0.
\end{split}
\end{align}
This choice of initial data is very commonly used in computational studies of blow-up for the 3D Euler equations.  See, e.g., \cite{Brachet_Meiron_Orszag_Nickel_Morf_Frisch_1984_JSP,Brachet_Meiron_Orszag_Nickel_Morf_Frisch_1983_JFM}.
% where we set $z=0$ in the $2D$ case.

% Let us remark on the minimum choice of $\alpha$.  Using $N$ grid points, all wave numbers are no larger than $k_{\text{max}}=N/2$.  Consider a solution $u$ to the BBM equation \eqref{BBM} with Fourier coefficients $\widehat{u}_k^n$ at the $n^{\text{th}}$ time-step.  Using the formal identity $uu_x = \tfrac{1}{2}(u^2)_x$, a fully-explicit spectral method using Euler time stepping with step-size $\Delta t$ is given by
% \begin{align*}
%  \widehat{u}_k^{n+1} = \widehat{u}_k^{n} + \tfrac{ik\Delta t}{1+\alpha^2k^2}\sum_{\ell}\widehat{u}_{k-\ell}^{n}\widehat{u}_{\ell}^{n}.
% \end{align*}
% (For the sake of clarity, and since we are only looking for an upper bound, we do not impose the 2/3's dealiasing rule here.  However, 2/3's dealiasing is enforced in all simulations.  Note that all sums here are finite.)
% Let $M>0$ be given such that for all $k$, $\pnt{\sum_k|\widehat{u}_k^n|^2}^{1/2}\leq M$.  Then by the triangle inequality and Young's convolution inequality for sums,
% \begin{align*}
%  \pnt{\sum_k|\widehat{u}_k^{n+1}|^2}^{1/2}
%  &\leq 
%  \pnt{\sum_k|\widehat{u}_k^{n}|^2}^{1/2} 
%  + \pnt{\sum_k\pnt{\tfrac{|k|\Delta t}{1+\alpha^2k^2}\sum_{\ell}
%  |\widehat{u}_{k-\ell}^{n}||\widehat{u}_{\ell}^{n}|}^2}^{1/2} 
%  \\&\leq 
%  M + \tfrac{|k|\Delta t}{1+\alpha^2k^2}M^2
% \end{align*}

It is important for this study that the energy and the enstrophy are properly captured.  Therefore, 
%we validate that energy equality \eqref{energy_equality} holds for a sufficiently large time.  
we consider the maximum relative error in the $\alpha-$energy by
% \begin{align*}
% \max_{t\in[0,T]}
%     \frac{\|\bu^\alpha(t)\|_{L^2}^2 +\alpha^2\|\nabla\bu^\alpha(t)\|_{L^2}^2
%    -\|\bu_0\|_{L^2}^2-\alpha^2\|\nabla\bu_0\|_{L^2}^2}{\|\bu_0\|_{L^2}^2+\alpha^2\|\nabla\bu_0\|_{L^2}^2}.
% \end{align*}
\begin{align*}
% \max_{t\in[0,T]}
%     \frac{\|\bu^\alpha(t)\|_{L^2}^2 +\alpha^2\|\nabla\bu^\alpha(t)\|_{L^2}^2
%    -\|\bu_0\|_{L^2}^2-\alpha^2\|\nabla\bu_0\|_{L^2}^2}{\|\bu_0\|_{L^2}^2+\alpha^2\|\nabla\bu_0\|_{L^2}^2}.
\varepsilon_\text{rel}:=
\max_{t\in[0,T]}
    \abs{\frac{E_\alpha(t) - E_\alpha(0)}{E_\alpha(0)}}.
\end{align*}

Due to the  Runge-Kutta-4 time stepping, perfect $\alpha-$energy conservation is not expected.  However, every Euler-Voigt simulation at resolution $1024^3$ and $512^3$ reported in this article had $\varepsilon_\text{rel}<2.2\times10^{-11}$ over the time interval of integration.  For the inviscid BBM simulations, $\varepsilon_\text{rel}<2.4\times10^{-14}$.  For the viscous BBM simulations, $\varepsilon_\text{rel}<2.8\times10^{-13}$ (for the viscous simulations the definition of $E_\alpha(t)$ was adapted to include the term $2\nu\int_0^t\|u_x(s)\|_{L^2}\,ds$, computed using Runge-Kutta-4 integration). In Fig. \ref{fig_energy_equality}, one can see the typical behavior of the terms comprising the $\alpha$-energy $E_\alpha(t)$, with a transfer of the energy ($\|\bu^\alpha\|_{L^2}^2$) to the scaled enstrophy ($\alpha^2\|\nabla\bu^\alpha\|_{L^2}^2$). 
% Moreover, Fig. \ref{fig_energy_equality} shows that the $\alpha-$energy is well-captured by the simulation, even at low resolution ($128^3$). 

\begin{figure}[htp]
\subfigure[$\alpha = 4/256$]{
\includegraphics[width=0.32\textwidth]{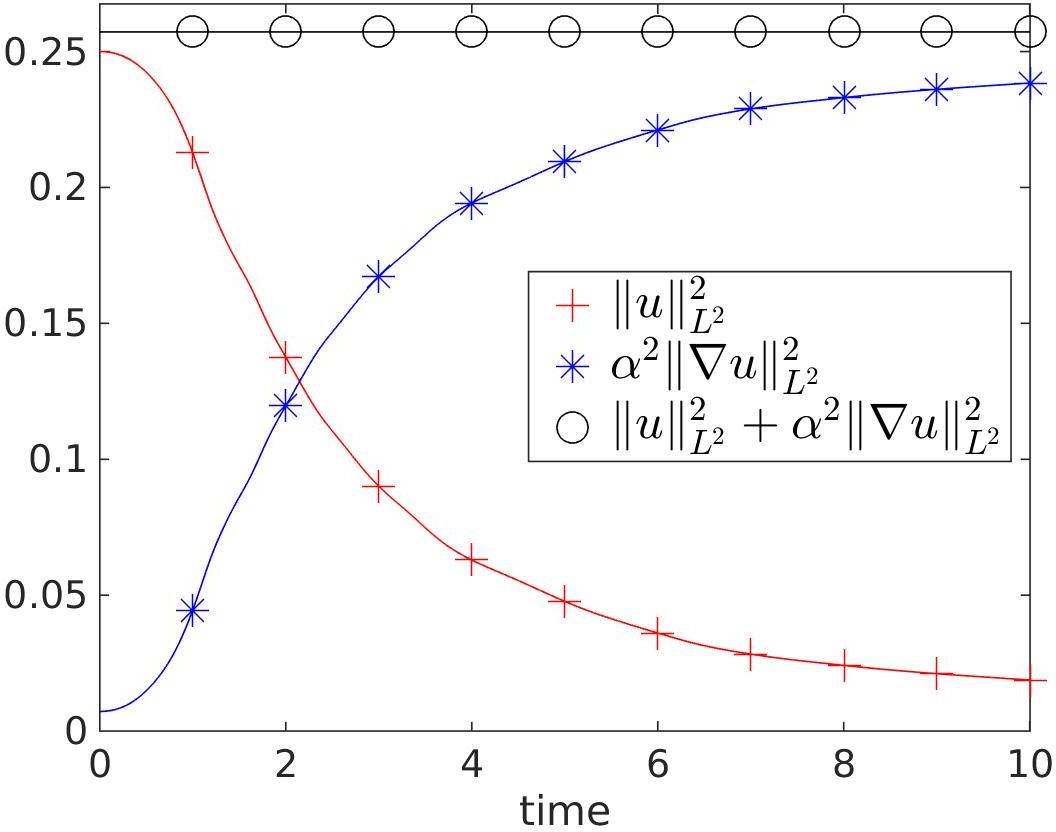}}
\subfigure[$\alpha = 2/256$]{
\includegraphics[width=0.32\textwidth]{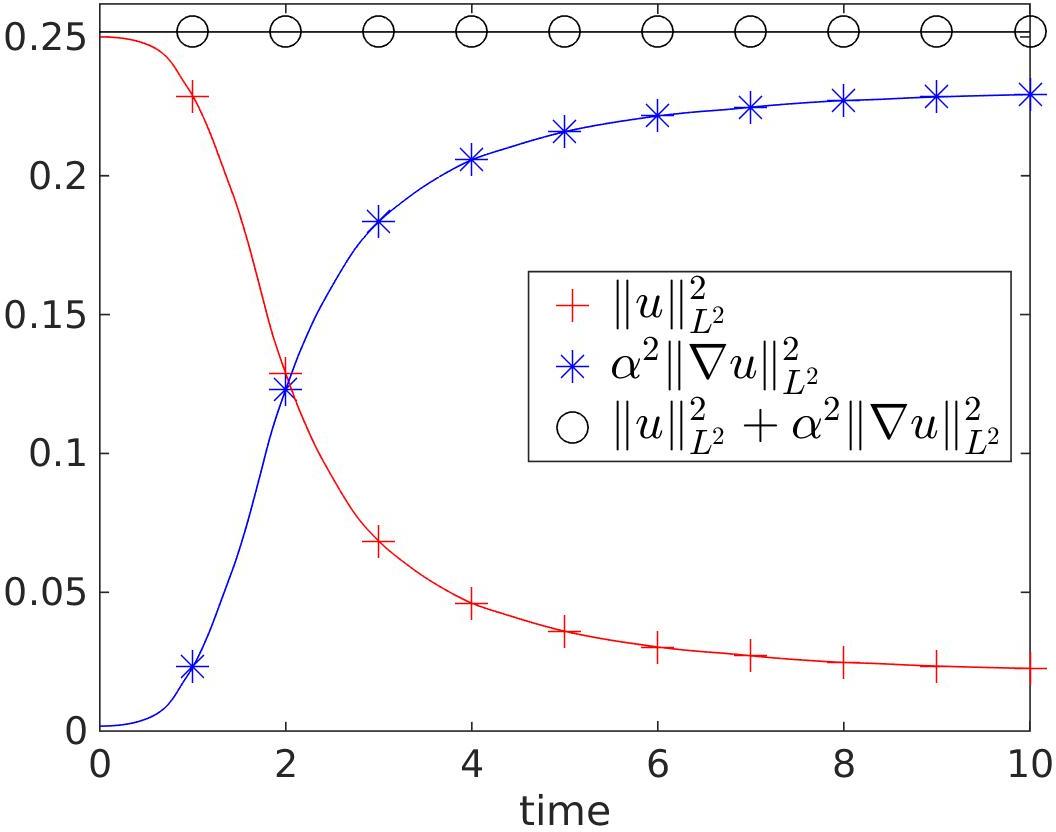}}
\subfigure[$\alpha = 1/256$]{
\includegraphics[width=0.32\textwidth]{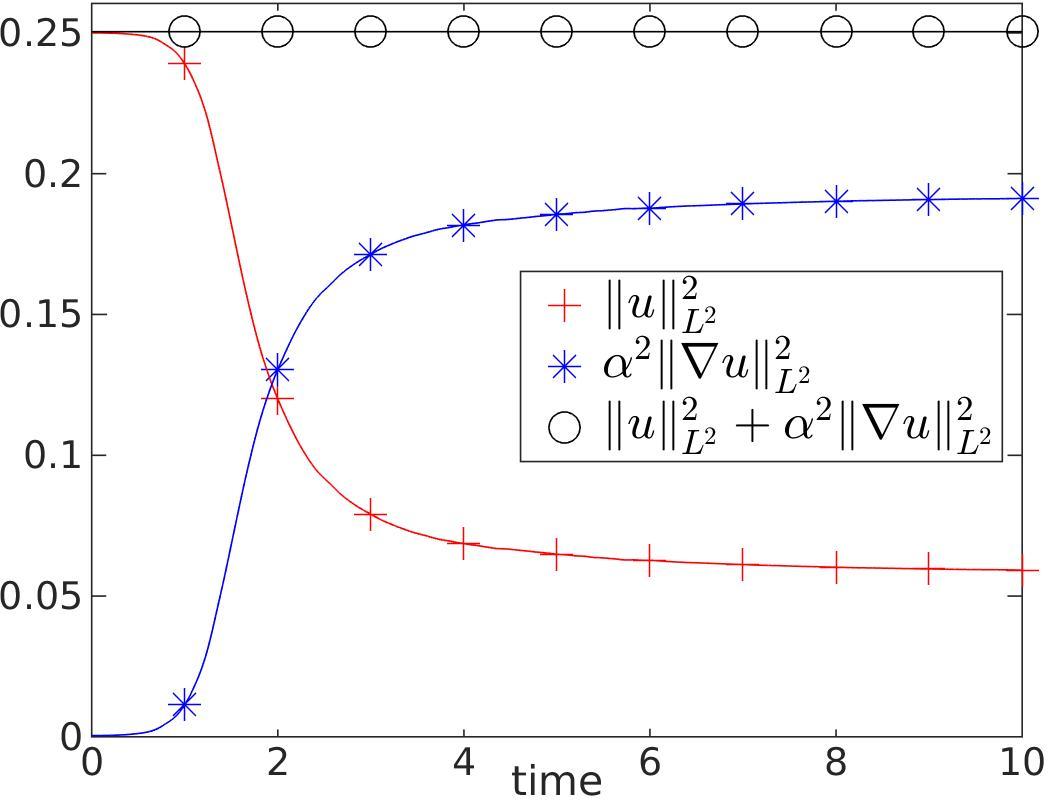}}
\caption{Energy and enstrophy (scaled by $\alpha^2$) vs. time for the 3D Euler-Voigt equations.  
(red ``$+$'': $\|\bu^\alpha(t)\|_{L^2}^2$, 
blue ``\textasteriskcentered'': $\alpha^2\|\nabla\bu^\alpha(t)\|_{L^2}^2$, 
black ``$\circ$'': $\|\bu^\alpha(t)\|_{L^2}^2 +\alpha^2\ \nabla\bu^\alpha(t)\|_{L^2}^2$.)    Resolution: $256^3$. 
}
\label{fig_energy_equality}
\end{figure}

\begin{remark} We emphasize that, since \eqref{EV} is globally well-posed in time, we are allowed to integrate the equations \textit{beyond the point of possible singularity} for the 3D Euler equations.  That is, if the Euler equations develop a singularity at time $T^{*}$, for given initial data, we may safely integrate \eqref{EV} with the same initial data up to and beyond $T^{*}$.  We believe this to be a major distinction of the blow-up criteria \eqref{blow_up_criterion_old} and \eqref{blow_up_max_first} from other blow-up criteria for the 3D Euler equations, such as \eqref{BKM}.
\end{remark}

% \begin{remark} For explicit schemes, a disadvantage of regularization by adding artificial viscosity (as in the case of the Smagorinsky model), or higher-order viscosity (sometimes called ``hyper-viscosity''), is that the CFL condition forces the time step to be very small for even moderate resolutions.  Preliminary experiments indicate that inviscid regularization may actually increase domain of stability.  We will study this effect in a forthcoming work.
% \end{remark}

% =================================================================
\section{Singularity Detection}\label{sec_blow_up}
% =================================================================
In this section, we computationally investigate the blow-up criterion \eqref{blow_up_max_first}.  We simulate solutions of \eqref{EV} with initial data \eqref{TaylorGreen}, tracking the quantity 
\begin{align}\label{blow_up_no_max}
   \|\nabla\bu^\alpha(t)\|_{L^2}
   \equiv
   \|\bomega^\alpha(t)\|_{L^2},
\end{align}
for several values of $t$, as $\alpha\maps0$, shown in Fig. \ref{fig_EV_max_blow_up} as contours of constant $t$.  
% .  To investigate \eqref{blow_up_max_first} computationally, we track the growth rate of the quantity in \eqref{blow_up_no_max} as a function of $t$ and $\alpha
% 
% 
% 
% If, for any given $t=T^*>0$, a non-positive slope persists as $\alpha\maps0$, then \eqref{blow_up_max_first} must follow, indicating that the 3D Euler equations develop a singularity on the interval $[0,T^*]$.  In Fig. \ref{fig_EV_blow_up}, we see that the contours corresponding to $t\geq1.5$ indeed have a non-positive slope for small $\alpha$, indicating that $\log(\alpha\|\nabla\bu^\alpha(t)\|_{L^2})\geq c$ for some $c>0$ and $t\geq1.5$.  Therefore, $\alpha\|\nabla\bu^\alpha(t)\|_{L^2}\geq e^c$, indicating that \eqref{blow_up_max_first} may hold.  We therefore consider this to be evidence of a possible blow-up in the Euler equations with initial data \eqref{TaylorGreen} at or before $T^*=1.5$.  Of course, as $\alpha\maps0$, the resolution must increase to infinity for the simulation to be well-resolved.  Thus, we are always limited by some $\alpha_{\text{min}}>0$, which is determined by computational resources.
% % , and we cannot be certain that the slope remains negative as $\alpha\maps0$.  
% % However, we note that problem of resolution is inherent in all computational studies of blow-up of the Euler equations.
% % , an major advantage of this approach is that it does not require, e.g., artificial viscosity in order to run at a resolved resolution.
% 
% 
% 
% We next investigate the blow-up criterion given by \eqref{blow_up_max_first}. 
Let us make the ansatz that
\begin{align}\label{sup_big_oh}
 \sup_{t\in[0,T^*]}\|\nabla\bu^\alpha(t)\|_{L^2}\sim \mathcal{O}(\alpha^p),
\end{align}
for $T^*>0$ sufficiently large and for some power $p$.
% , possibly depending on $T^*$.  
If $p\leq-1$, then \eqref{blow_up_max_first} holds, and the Euler equations develop a singularity within the interval $[0,T^*]$.  The quantity in \eqref{sup_big_oh} is shown in Fig. \ref{fig_EV_max_blow_up} as a function of $\alpha$ with various values of $T^*$. 
% (Since we have a discrete number of times, the supremum is a maximum.) 
% (sup = max because the funciton is continuous and the interval [0,T^*] is compact.)
The slope of the lines corresponding to $T^* \approx 4.2$ are strictly less than $-1$ for small $\alpha$, indicating  a possible blow-up of the Euler equations near time $T^* \approx 4.2$.

% We run with a fixed time step $\Delta t$ and fixed $\alpha$, and then record $ \max_{t\in[0,T]}\|\nabla\bu^\alpha(t)\|_{L^2}$ as time evolves.  Repeating this for several values of $\alpha$ with the same $\Delta t$ allows one to later  reconstruct the curves shown in Fig. \ref{fig_EV_max_blow_up}.

% Therefore, we track the quantity
% \begin{align}\label{blow_up_max_first}
%    \max_{t\in[0,T]}\|\nabla\bu^\alpha(t)\|_{L^2}
%    \equiv
%    \max_{t\in[0,T]}\|\bomega^\alpha(t)\|_{L^2},
% \end{align}
% as $\alpha\maps0$.  

% ============================= Slopes 2 EV =======================================
\begin{figure}[htp]
\includegraphics[width=0.98\columnwidth,trim = 0mm 0mm 0mm 0mm, clip]{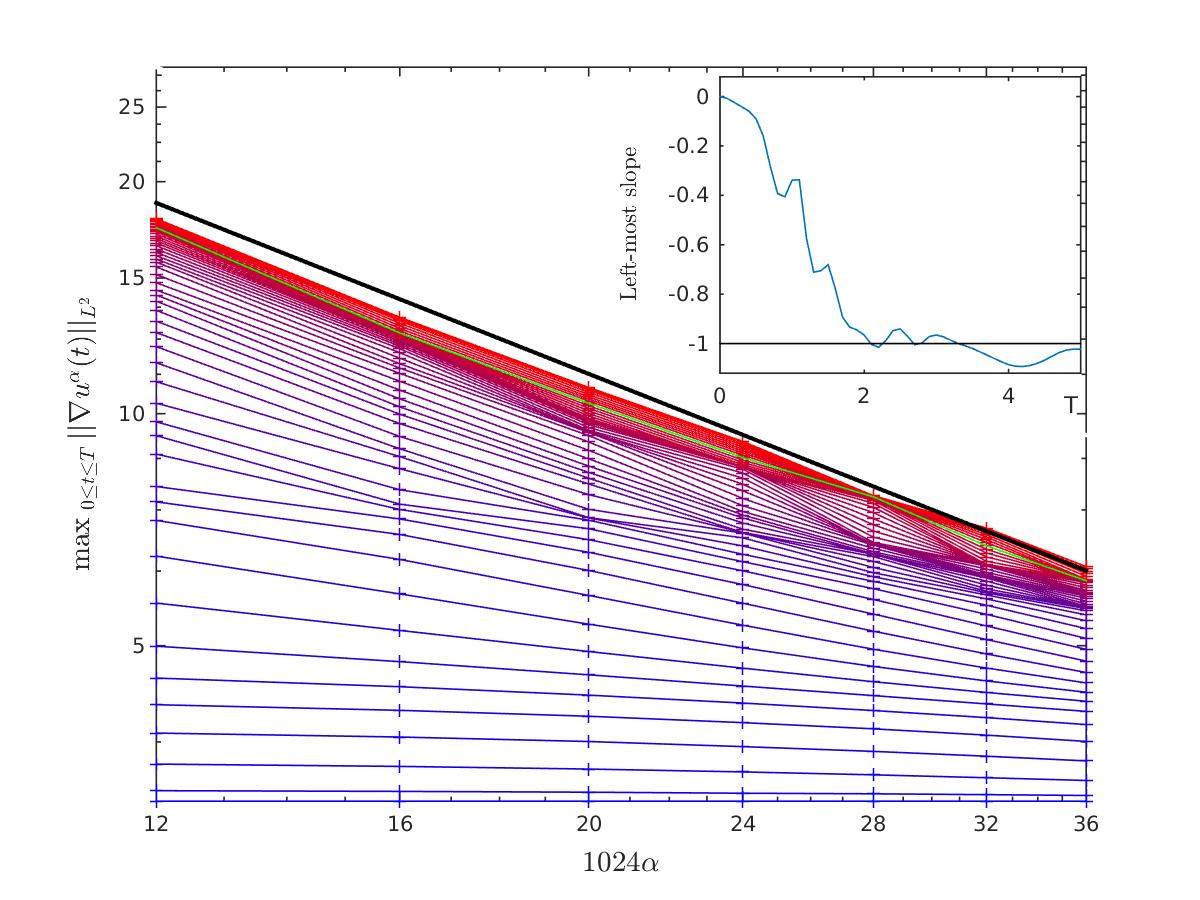}

\vspace{-0.3cm}

\caption{Log-log plot of $\max_{t\in[0,T^*]}\|\nabla\bu^\alpha(t)\|_{L^2}$ vs. $\alpha$ for the 3D Euler-Voigt equations at $T = 0.0, 0.1, \ldots, 4.9, 5.0$, $\alpha=12/1024,\ldots,36/1024$.   The thick black line is $C\alpha^{-1}$ vs. $\alpha$.  Green curve corresponds to $T = 4.2$. Resolution: $N^3=1024^3$ for $\alpha\leq 24/1024$, $N^3=512^3$ $\alpha\geq 28/1024$. Inset: Slope between $\alpha=12/1024$ and $\alpha=16/1024$.   Minimum value of $-1.0931$ at $T=4.2$.
}\label{fig_EV_max_blow_up}
\end{figure}
% =================================================================================

% ,width=\textwidth

% =================================================================
\section{Blow-Up for Burgers via the Benjamin-Bona-Mahony Equations}\label{sec_BBM}
% =================================================================
In this section, we consider the 1D Benjamin-Bona-Mahony (BBM) equation for water waves, given by
\begin{align}\label{BBM}
  -\alpha^2 u_{txx} + u_t +uu_x = 0, \qquad u(x,0) = u_0(x).
\end{align}
This equation was derived in \cite{Benjamin_Bona_Mahony_1972} as a model for 
% gravity-driven surface 
water waves, where it was shown to be globally well-posed.  It can be viewed as a regularization of the inviscid Burgers equation by formally setting $\alpha=0$ in \eqref{BBM}.  Notably, we do not propose here that the solution of \eqref{BBM} converges to the unique entropy solution of Burgers equation. We view this equation as a 1D analogue of the Euler-Voigt equations, with a crucial difference being that the pressure and the divergence-free condition are absent.   One advantage of considering equation \eqref{BBM} is that that solutions to the Burgers equation are known to develop a singularity in finite time; a fact that is unknown for solutions of the 3D Euler equations.  By following arguments similar to those in \cite{Larios_Titi_2009}, it is straight-forward to show that the analogue of \eqref{blow_up_max_first} implies blow-up for the Burgers equation on $[0,T^*]$.  
% Additionally, the dynamics are much easier to observe.

% % ============================= Slopes 1 EV =======================================
% \begin{figure}[htp]%[width=\columnwidth]
% % \includegraphics[width=\columnwidth,trim = 0mm 0mm 0mm 0mm, clip]{voigt_blow_ups_v2_20150916.png}
% \includegraphics[width=\columnwidth,trim = 0mm 0mm 0mm 0mm, clip]{voigt_blow_ups_v2.jpg}
% 
% \vspace{-0.3cm}
% 
% \caption{Log-log plot of $\alpha$ vs. $\alpha\|\nabla\bu^\alpha(t)\|_{L^2}$ for 3D Euler-Voigt equations at $t = 0.0, 0.1, 0.2, 0.3, \ldots, 2.3, 2.4$. Initial condition is blue; plots drawn increasingly red as time increases.   Inset: slope, with cross-over time at $T\approx1.5$. Resolution: $1024^3$.} 
% \label{fig_EV_blow_up}
% \end{figure}
% % =================================================================================

We use the method described in Section \ref{sec_blow_up} to try to identify the known singularity in Burgers equation ($u_t + uu_x = 0$).   That is, we test the analogue of criterion  \eqref{blow_up_max_first} for problem \eqref{BBM}, as $\alpha\maps0$.  The domain is the periodic interval $[-\pi,\pi]$, and the initial data is $u_0(x) = -\sin(x)$.  The solution of Burgers equation  with this initial data develops a singularity at time $T^*=1$.  

% % ============================= Slopes 3 BBM ======================================
% \begin{figure}[htp]
% % \includegraphics[scale=0.2]{BBM_blow_up_alpha_limit_1024}%
% % \includegraphics[width=0.9\columnwidth,clip,trim=20mm 13mm 20mm 20mm]{BBM_blow_up_alpha_limit_8192.jpg}
% \includegraphics[width=\columnwidth,clip,trim=0mm 0mm 0mm 0mm]{BBM_blow_up_alpha_limit.jpg}
% % \missingfigure{BBM $\limsup_{\alpha\maps0^+}\max_{0\leq t\leq T}$}
% 
% \vspace{-0.3cm}
% 
% \caption{Log-log plot of $\alpha$  vs. $\alpha\|u_x^\alpha(t)\|_{L^2}$ for the BBM equations at $t=$ 0.6, 0.65, 0.7, 0.75, 0.8, 0.85, 0.9, 0.95, 0.97, 0.98, 0.99, 0.995, 1, 1.005, 1.006, 1.007, 1.01.  $\alpha$-values: $4\times10^{-4},\ldots, 4.5\times10^{-3}$. Inset: Slope near smallest $\alpha$-values becomes negative at $T\approx1.006$, indicating a blow-up at or before this time. Resolution: $N = 8192$.
% }\label{fig_BBM_blow_up}
% \end{figure}
% % =================================================================================

Fig. \ref{fig_BBM_max_blow_up} is analogous to Fig. \ref{fig_BBM_max_blow_up_nu}.
In Fig. \ref{fig_BBM_max_blow_up}, before the (Burgers) blow-up time $T^*=1$, the curves tend to decay faster than $\alpha$ as $\alpha\maps0$.  However, slightly after $T=1.0$, the curves become slightly  convex on the log-log plot for small $\alpha$.  If this trend continues as $\alpha\maps0$, 
% e.g., if all curves before $T^*=1$ tend toward zero, and all curves after $T^*=1$ eventually flatten as $\alpha\maps0$, one may deduce from criterion \eqref{blow_up_criterion_old}
the analogue of criterion \eqref{blow_up_max_first} implies Burgers equation develops a singularity at or before time $T^*\approx1.138$. This is already known by other means (e.g., the method of characteristics), but the results here serve to corroborate criterion \eqref{blow_up_max_first} as a test for blow-up. 
% Similarly, Fig. \ref{fig_BBM_max_blow_up} indicates a blow-up near $T=1$, via criterion \eqref{blow_up_max_first}.

Finally, we repeat the simulation carried out to generate Fig. \ref{fig_BBM_max_blow_up}, except that we use the viscous BBM equation ($\nu=0.005>0$) instead of equation \eqref{BBM}.  Namely, we consider
\begin{align}\label{BBM_nu}
  -\alpha^2 u_{txx} + u_t +uu_x = \nu u_{xx}.
\end{align}
Due to the well-known fact that the viscous Burgers equation ($u_t +uu_x = \nu u_{xx}$) does not develop a singularity, we expect that criterion \eqref{blow_up_max_first} will not detect a singularity.  Indeed, in Fig. \ref{fig_BBM_max_blow_up_nu} we see that the curves do not obtain the critical slope value of $p=-1$ as $\alpha\maps0$, and indeed the lowest value is $\approx-0.235$, far away from the critical value.  Thus, in the case of Burgers equation, criterion \ref{blow_up_max_first} detects a singularity in the inviscid case, and does not detect one in the viscous case, exactly as expected.

% ============================= Slopes 3 BBM ======================================
\begin{figure}[htp]
\includegraphics[width=0.9\columnwidth, clip, trim=0mm 0mm 0mm 0mm]{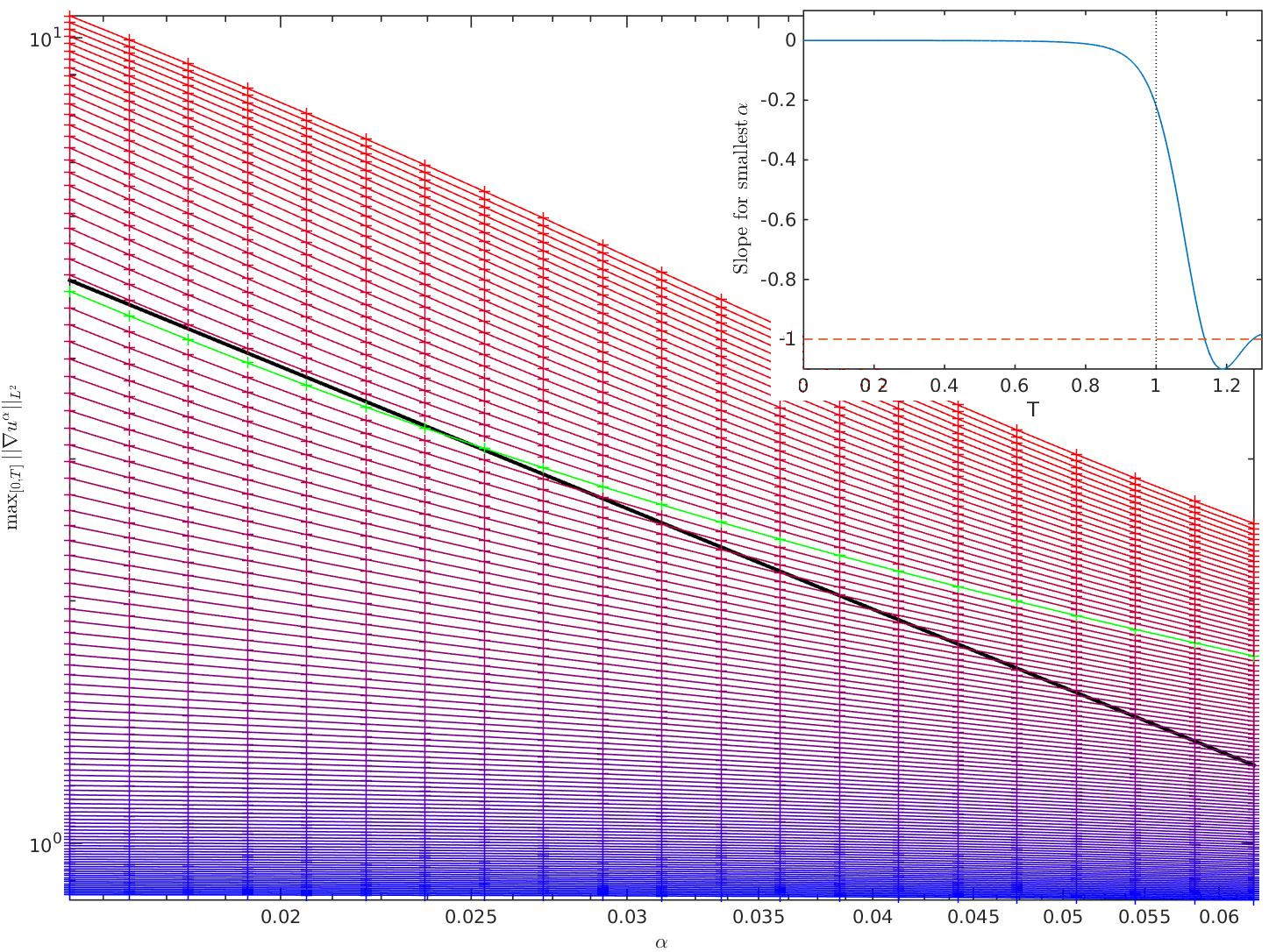}
% \includegraphics[width=0.6\columnwidth]{best_plot8192.jpg}
% \includegraphics[scale=0.5]{T_varying_blowup}%

% \vspace{-0.3cm}

\caption{Log-log plot of $\max_{0\leq t\leq T}\|u^\alpha_x(t)\|_{L^2}$ vs. $\alpha$ or the \textbf{inviscid} ($\nu=0$) BBM equations at various values of $T=0.65,\ldots,1.25$.  Green curve corresponds to $T\approx1.138$. 
% Thick line is $\alpha$ vs. $C\alpha^{-1}$. 
Inset: Slope near smallest $\alpha$-values drops below -1 at $T\approx1.138$, indicating a blow-up at or before this time.
Resolution: $N = 8192$.}\label{fig_BBM_max_blow_up}
% \vspace{-0.5cm}
\end{figure}
% =================================================================================

% ============================= Slopes 4 BBM ======================================
\begin{figure}[htp]
\includegraphics[width=0.9\columnwidth,clip,trim=0mm 0mm 0mm 0mm]{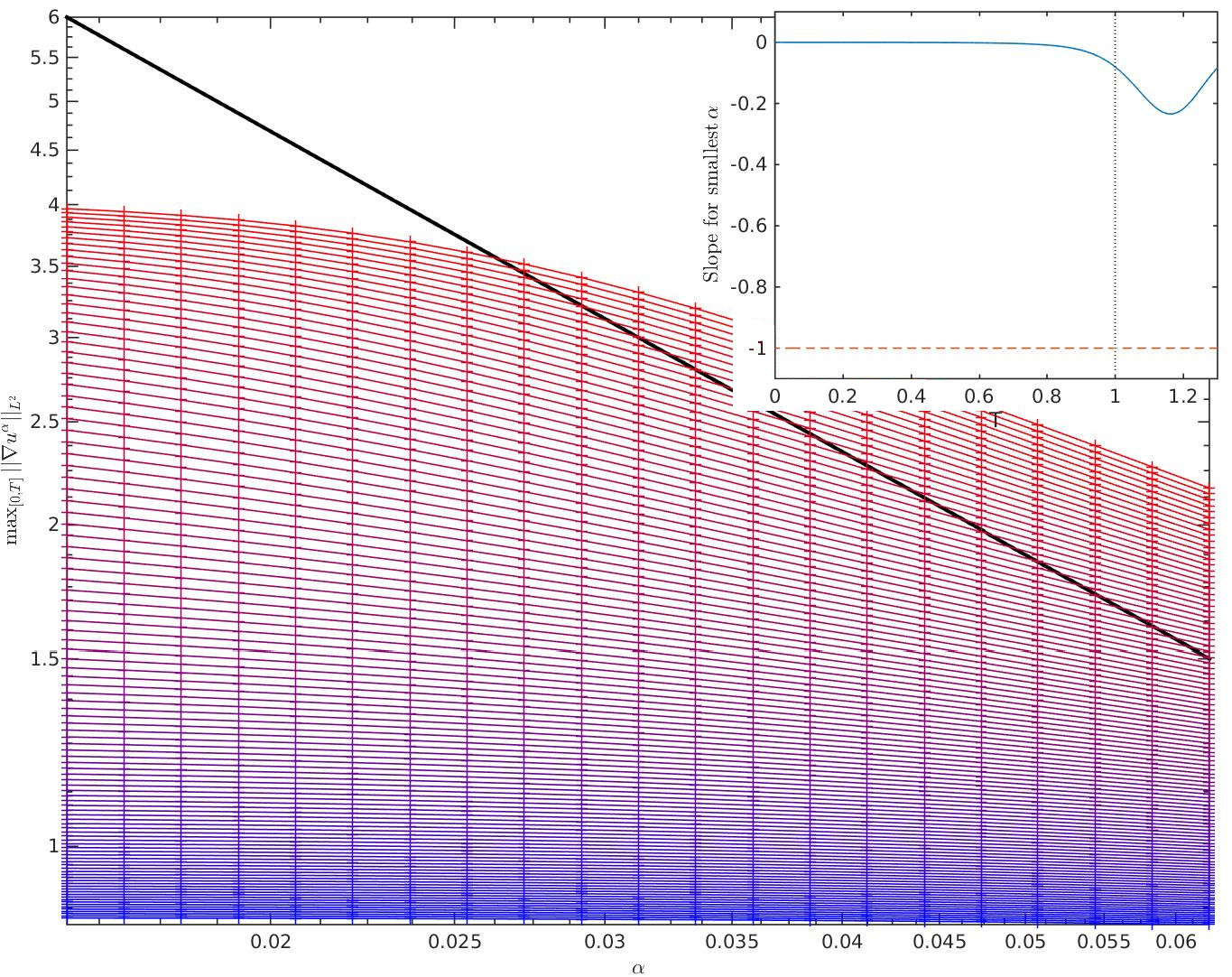}
% \missingfigure{BBM $\limsup_{\alpha\maps0^+}\max_{0\leq t\leq T}$}

% \vspace{-0.3cm}

\caption{Log-log plot of $\max_{0\leq t\leq T}\|u^\alpha_x(t)\|_{L^2}$ vs. $\alpha$ for the \textbf{viscous} ($\nu=0.005>0$) BBM equations at various values of T. Same $T$ values as in Fig. \ref{fig_BBM_max_blow_up}. 
% Thick line is $\alpha$ vs. $C\alpha^{-1}$. 
Inset: Slope never drops below -1, meaning no blow-up is detected. Resolution: $N = 8192$.
}\label{fig_BBM_max_blow_up_nu}
\end{figure}
% =================================================================================

Finally, for two fixed values of $\alpha$, namely $\alpha_1 = 128/8192$ and $\alpha_2 = 138/8192$, we compute the value of the minimum slope as $\nu\maps0$; that is, \[S_{\text{min}}(\nu):=\min_{0<t<T}(\|u^{\alpha_2}_x(t)\|_{L^2} - \|u^{\alpha_1}_x(t)\|_{L^2})/(\alpha_2 - \alpha_1)\] as $\nu\maps0$, where $u^{\alpha_1}$ and $u^{\alpha_1}$ are solutions to \eqref{BBM_nu}.  This idea was suggested to us by one of the reviewers.  It demonstrates the dependence of the blow-up quantity on $\nu$, at least for a given resolution.  One can see a smooth transition from right to left as $\nu\rightarrow0$, crossing the blow-up criterion value of $-1$ roughly at viscosity $\nu_*=2.3\times10^{-4}$.  Since Burgers equation is globally well-posed for any $\nu>0$, for $0<\nu\ll 2.3\times10^{-4}$, the detection yields a false positive for singularity formation here.  This underscores the need for higher-resolution studies (which would allow for smaller $\alpha$-values), as well as enhanced extrapolation methods.
% ============================= min slope BBM ======================================
\begin{figure}[htp]
\includegraphics[width=0.75\columnwidth,clip,trim=0mm 0mm 0mm 0mm]{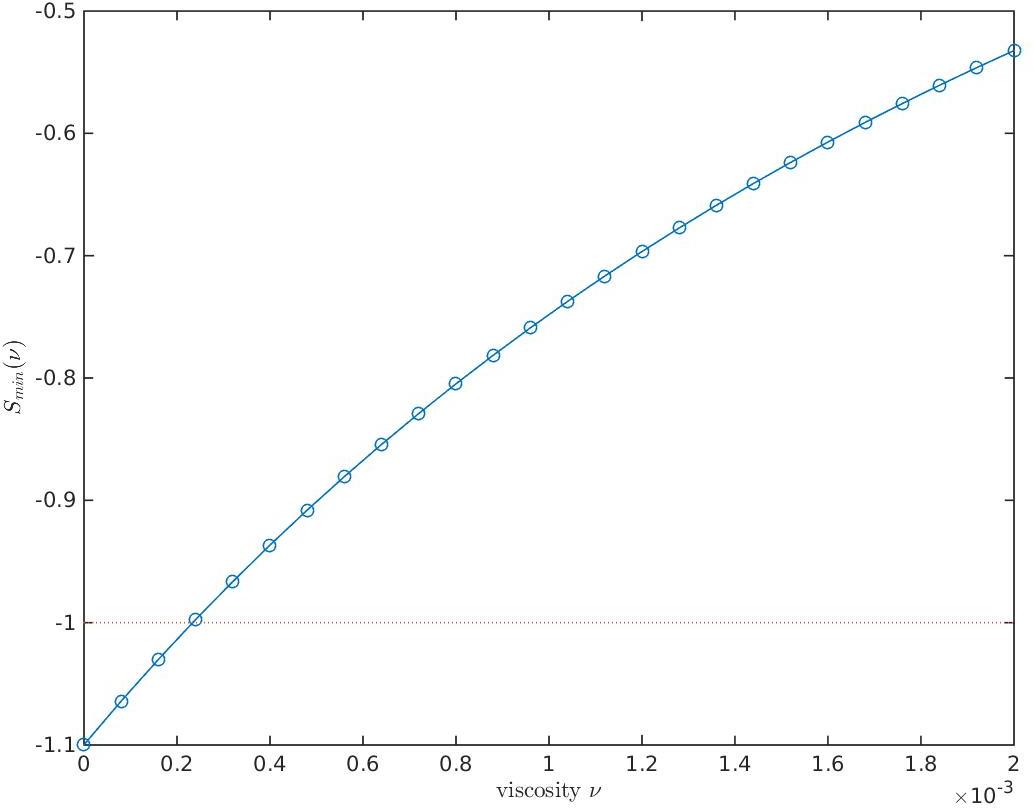}
\caption{The quantity $S_{\text{min}}$ vs. $\nu$. This shows the dependence on the minimal slope of the blow-up quantity for values $\alpha_1 = 128/8192$ and $\alpha_2 = 138/8192$.  Resolution 8192.}\label{fig_min_slope}
\end{figure}

\section{Conclusion}\label{sec_conclusion}
The results in Section \ref{sec_blow_up} provide computational evidence for the development of a singularity of the 3D Euler equations with Taylor-Green initial data \eqref{TaylorGreen}, near time $T=4.2$.  
% Future studies at smaller $\alpha$-values (and thus higher resolution) may either corroborate or contradict these findings.  
Future studies at smaller $\alpha$-values (and thus higher 
resolution), combined with state-of-the-art extrapolation methods, may either 
corroborate or contradict these findings.
In any case, the approach presented here represents a new method in the computational search for singularities, and its effectiveness has been demonstrated in the case of Burgers equation.

\section*{Acknowledgments}

% \begin{acknowledgments}
The work of E.S.T. was supported in part by ONR grant number N00014-15-1-2333, and by the NSF grants number DMS-1109640 and DMS-1109645. 
% \end{acknowledgments}

% BibTeX users please use one of
%\bibliographystyle{spbasic}      % basic style, author-year citations
% \bibliographystyle{spmpsci}      % mathematics and physical sciences
% \bibliographystyle{spphys}       % APS-like style for physics
% \bibliographystyle{plain}
% \bibliography{LariosBiblio}   % name your BibTeX data base

\begin{thebibliography}{}
 \bibitem{Larios_Titi_2009}
A.~Larios, E.S. Titi, Discrete Contin. Dyn. Syst. Ser. B \textbf{14}(2/3 \#15),
  603 (2010)

\bibitem{Beale_Kato_Majda_1984}
J.T. Beale, T.~Kato, A.J. Majda, Comm. Math. Phys. \textbf{94}(1), 61 (1984)

\bibitem{Berselli_Iliescu_Layton_2006_book}
L.C. Berselli, T.~Iliescu, W.J. Layton, \emph{Mathematics of large eddy
  simulation of turbulent flows}.
\newblock Scientific Computation (Springer-Verlag, Berlin, 2006)

\bibitem{Brachet_Meiron_Orszag_Nickel_Morf_Frisch_1983_JFM}
M.E. Brachet, D.~Meiron, S.~Orszag, B.~Nickel, R.~Morf, U.~Frisch, J. Fluid
  Mech. \textbf{130}, 411 (1983)

\bibitem{Bustamante_Brachet_2012}
M.D. Bustamante, M.~Brachet, Phys. Rev. E \textbf{86}, 066302 (2012).
\newblock 

\bibitem{Cao_Lunasin_Titi_2006}
Y.~Cao, E.~Lunasin, E.S. Titi, Commun. Math. Sci. \textbf{4}(4), 823 (2006)

\bibitem{Oskolkov_1973}
A.P. Oskolkov, Zap. Nau\v cn. Sem. Leningrad. Otdel. Mat. Inst. Steklov. (LOMI)
  \textbf{38}, 98 (1973).
% \newblock Boundary value problems of mathematical physics and related questions
%   in the theory of functions, 7

\bibitem{Oskolkov_1982}
A.P. Oskolkov, Zap. Nauchn. Sem. Leningrad. Otdel. Mat. Inst. Steklov. (LOMI)
  \textbf{115}, 191 (1982).
% \newblock Boundary value problems of mathematical physics and related questions
%   in the theory of functions, 14

\bibitem{Gibbon_2008}
J.D. Gibbon, Phys. D \textbf{237}(14-17), 1894 (2008).

\bibitem{Bohm_1992}
M.~B{\"o}hm, Math. Nachr. \textbf{155}, 151 (1992).

\bibitem{Catania_2009}
D.~Catania, Ann. Univ Ferrara \textbf{56}, 1 (2010).

\bibitem{Catania_Secchi_2009}
D.~Catania, P.~Secchi, Quad. Sem. Mat. Univ. Brescia (37) (2009)

\bibitem{Larios_Lunasin_Titi_2015}
A.~Larios, E.~Lunasin, E.S. Titi,  (preprint) arXiv:1010.5024

\bibitem{Ebrahimi_Holst_Lunasin_2012}
M.A. Ebrahimi, M.~Holst, E.~Lunasin, IMA J. App. Math. pp. 1--26 (2012).
\newblock Doi:10.1093/imamat/hxr069

\bibitem{Levant_Ramos_Titi_2009}
B.~Levant, F.~Ramos, E.S. Titi, Commun. Math. Sci. \textbf{8}(1), 277 (2010)

\bibitem{Khouider_Titi_2008}
B.~Khouider, E.S. Titi, Comm. Pure Appl. Math. \textbf{61}(10), 1331 (2008)

\bibitem{Olson_Titi_2007}
E.~Olson, E.S. Titi, Nonlinear Anal. \textbf{66}(11), 2427 (2007).

\bibitem{Ramos_Titi_2010}
F.~Ramos, E.S. Titi, Discrete Contin. Dyn. Syst. \textbf{28}(1), 375 (2010).

\bibitem{Kalantarov_Levant_Titi_2009}
V.K. Kalantarov, B.~Levant, E.S. Titi, J. Nonlinear Sci. \textbf{19}(2), 133
  (2009)

\bibitem{Kalantarov_Titi_2009}
V.K. Kalantarov, E.S. Titi, Chinese Ann. Math. B \textbf{30}(6), 697 (2009)

\bibitem{Kuberry_Larios_Rebholz_Wilson_2012}
P.~Kuberry, A.~Larios, L.G. Rebholz, N.E. Wilson, Comput. Math. Appl.
  \textbf{64}(8), 2647 (2012).

\bibitem{DiMolfetta_Krstlulovic_Brachet_2015}
G.~Di~Molfetta, G.~Krstlulovic, M.~Brachet, Phys. Rev. E \textbf{92}, 013020
  (2015).

\bibitem{Layton_Rebholz_2013_Voigt}
W.J. Layton, L.G. Rebholz, Int. J. Comput. Fluid Dyn. \textbf{27}(3), 184
  (2013).

\bibitem{Brachet_Meiron_Orszag_Nickel_Morf_Frisch_1984_JSP}
M.E. Brachet, D.~Meiron, S.~Orszag, B.~Nickel, R.~Morf, U.~Frisch, J. Statist.
  Phys. \textbf{34}(5-6), 1049 (1984).

\bibitem{Bustamante_2011}
M.D. Bustamante, Phys. D \textbf{240}(13), 1092 (2011).

\bibitem{Cichowlas_Brachet_2005}
C.~Cichowlas, M.E. Brachet, Fluid Dynam. Res. \textbf{36}(4-6), 239 (2005).

\bibitem{Deng_Hou_Yu_2005}
J.~Deng, T.Y. Hou, X.~Yu, Comm. Partial Differential Equations
  \textbf{30}(1-3), 225 (2005)

\bibitem{Grafke_Grauer_2013_Lagrangian_no_blow_up}
T.~Grafke, R.~Grauer, Appl. Math. Lett. \textbf{26}(4), 500 (2013).

\bibitem{Grafke_Homann_Dreher_Grauer_2007}
T.~Grafke, H.~Homann, J.~Dreher, R.~Grauer, Phys. D \textbf{237}(14-17), 1932
  (2008).

\bibitem{Hou_2009}
T.Y. Hou, Acta Numer. \textbf{18}, 277 (2009)

\bibitem{Hou_Lei_Luo_Wang_Zou_2014_Euler}
T.Y. Hou, Z.~Lei, G.~Luo, S.~Wang, C.~Zou, Arch. Ration. Mech. Anal.
  \textbf{212}(2), 683 (2014).

\bibitem{Hou_Li_2006_Depletion}
T.Y. Hou, R.~Li, J. Nonlinear Sci. \textbf{16}(6), 639 (2006).

\bibitem{Hou_Li_2008_Blowup}
T.Y. Hou, R.~Li, Phys. D \textbf{237}(14-17), 1937 (2008).

\bibitem{Hou_Li_2008_Numerical}
T.Y. Hou, R.~Li, in \emph{Mathematics and computation, a contemporary view},
  \emph{Abel Symp.}, vol.~3 (Springer, Berlin, 2008), pp. 39--66

\bibitem{Kerr_1993}
R.M. Kerr, Phys. Fluids A \textbf{5}(7), 1725 (1993)

\bibitem{Kerr_2013_Euler_Blowup}
R.M. Kerr, J. Fluid Mech. \textbf{729}, R2, 13 (2013).

\bibitem{Luo_Hou_2013_Potentially_Singular}
G.~Luo, T.Y. Hou,   (2013).
\newblock arXiv:1310.0497

\bibitem{Luo_Hou_2014_Euler_BlowUp}
G.~Luo, T.Y. Hou, Multiscale Model. Simul. \textbf{12}(4), 1722 (2014).

\bibitem{Pouquet_Brachet_Mininni_2010_TG_MHD}
A.~Pouquet, E.~Lee, M.E. Brachet, P.D. Mininni, D.~Rosenberg, Geophys.
  Astrophys. Fluid Dyn. \textbf{104}(2-3), 115 (2010).

\bibitem{Siegel_Caflisch_2009}
M.~Siegel, R.E. Caflisch, Phys. D \textbf{238}(23-24), 2368 (2009).

\bibitem{Cheskidov_Shvydkoy_2014_unified_blow_up}
A.~Cheskidov, R.~Shvydkoy, J. Math. Fluid Mech. \textbf{16}(2), 263 (2014).

\bibitem{Constantin_Fefferman_1993}
P.~Constantin, C.~Fefferman, Indiana Univ. Math. J. \textbf{42}(3), 775 (1993).

\bibitem{Constantin_Fefferman_Majda_1996}
P.~Constantin, C.~Fefferman, A.J. Majda, Comm. Partial Differential Equations
  \textbf{21}(3-4), 559 (1996)

\bibitem{Ferrari_1993}
A.B. Ferrari, Comm. Math. Phys. \textbf{155}(2), 277 (1993)

\bibitem{Gibbon_Titi_2013_Blowup}
J.D. Gibbon, E.S. Titi, J. Nonlinear Sci. \textbf{23}(6), 993 (2013).

\bibitem{Kozono_Ogawa_Taniuchi_2002}
H.~Kozono, T.~Ogawa, Y.~Taniuchi, Math. Z. \textbf{242}(2), 251 (2002).

\bibitem{Ponce_1985}
G.~Ponce, Comm. Math. Phys. \textbf{98}(3), 349 (1985).

\bibitem{Majda_Bertozzi_2002}
A.J. Majda, A.L. Bertozzi, \emph{Vorticity and {I}ncompressible {F}low},
  \emph{Cambridge Texts in Applied Mathematics}, vol.~27 (Cambridge University
  Press, Cambridge, 2002)

\bibitem{Marchioro_Pulvirenti_1994}
C.~Marchioro, M.~Pulvirenti, \emph{Mathematical {T}heory of {I}ncompressible
  {N}onviscous {F}luids}, \emph{Applied Mathematical Sciences}, vol.~96
  (Springer-Verlag, New York, 1994)

\bibitem{Cheskidov_Constantin_Friedlander_Shvydkoy}
A.~Cheskidov, P.~Constantin, S.~Friedlander, R.~Shvydkoy, Nonlinearity
  \textbf{21}(6), 1233 (2008).

\bibitem{Constantin_E_Titi_1994}
P.~Constantin, W.~E, E.S. Titi, Comm. Math. Phys. \textbf{165}(1), 207 (1994).

\bibitem{Eyink_1994}
G.L. Eyink, Phys. D \textbf{78}(3-4), 222 (1994).

\bibitem{Eyink_Sreenivasan_2006}
G.L. Eyink, K.R. Sreenivasan, Rev. Modern Phys. \textbf{78}(1), 87 (2006).

\bibitem{Onsager_1949}
L.~Onsager, Nuovo Cimento (9) \textbf{6}(Supplemento, 2 (Convegno Internazionale
  di Meccanica Statistica)), 279 (1949)

\bibitem{Bardos_Titi_2010}
C.~Bardos, E.S. Titi, Discrete Contin. Dyn. Syst. Ser. S \textbf{3}(2), 185
  (2010).

\bibitem{Buckmaster_DeLellis_Isett_Szekelyhidi_2015}
T.~Buckmaster, C.~De~Lellis, P.~Isett, L.~Sz{\'e}kelyhidi, Jr., Ann. of Math.
  (2) \textbf{182}(1), 127 (2015).

\bibitem{Buckmaster_2015}
T.~Buckmaster, Comm. Math. Phys. \textbf{333}(3), 1175 (2015).

\bibitem{DeLellis_Szekelyhidi_2010_admissibility}
C.~De~Lellis, L.~Sz{\'e}kelyhidi, Jr., Arch. Ration. Mech. Anal.
  \textbf{195}(1), 225 (2010).

\bibitem{DeLellis_Szekelyhidi_2013}
C.~De~Lellis, L.~Sz{\'e}kelyhidi, Jr., Invent. Math. \textbf{193}(2), 377
  (2013).

\bibitem{DeLellis_Szekelyhidi_2014_Onsager}
C.~De~Lellis, L.~Sz{\'e}kelyhidi, Jr., J. Eur. Math. Soc. (JEMS)
  \textbf{16}(7), 1467 (2014).

\bibitem{Isett_2013_thesis}
P.~Isett, ProQuest LLC, Ann Arbor, MI pp. 1--227 (2012).
\newblock Thesis (Ph.D.)--Princeton University. (arXiv:1211.4065)

\bibitem{Chorin_1968}
A.J. Chorin, Math. Comp. \textbf{22}, 745 (1968)

\bibitem{Temam_1969_projection}
R.~Temam, Arch. Rational Mech. Anal. \textbf{32}, 135 (1969)

\bibitem{Benjamin_Bona_Mahony_1972}
T.B. Benjamin, J.L. Bona, J.J. Mahony, Philos. Trans. Roy. Soc. London Ser. A
  \textbf{272}(1220), 47 (1972)
 
\end{thebibliography}

% % Non-BibTeX users please use
% \begin{thebibliography}{}
% %
% % and use \bibitem to create references. Consult the Instructions
% % for authors for reference list style.
% %
% \bibitem{RefJ}
% % Format for Journal Reference
% Author, Article title, Journal, Volume, page numbers (year)
% % Format for books
% \bibitem{RefB}
% Author, Book title, page numbers. Publisher, place (year)
% % etc
% \end{thebibliography}

\begin{scriptsize}

\end{scriptsize}

\end{document}